\newtheorem{thr}{Theorem}
\newtheorem{lem}[thr]{Lemma}
\newtheorem{conj}[thr]{Conjecture}
\newtheorem{cor}[thr]{Corollary}
\newtheorem{claim}[thr]{Claim}
\theoremstyle{definition}
\newtheorem{defn}[thr]{Definition}
\theoremstyle{remark}
\numberwithin{equation}{section}
\def\A{\mathcal{A}}
\def\N{\mathbb{N}}
\def\B{\mathcal{B}}
\def\F{\mathbb{F}}
\def\Mat{\mathrm{Mat}}
\begin{document}

\title[An improved bound for the length of matrix algebras]{An improved bound for the \\ length of matrix algebras}

\author{Yaroslav Shitov}
\email{yaroslav-shitov@yandex.ru}







\begin{abstract}
Let $S$ be a set of $n\times n$ matrices over a field $\F$. 
We show that the $\F$-linear span of the words in $S$ of length at most
$$2n\log_2n+4n$$
is the full $\F$-algebra generated by $S$.
This improves on the $n^2/3+2/3$ bound by Paz (1984) and an $O(n^{3/2})$ bound of Pappacena (1997).
\end{abstract}

\maketitle

Let $S$ be a subset of a finite-dimensional associative algebra $\mathcal{A}$ over a field $\F$. An element $a\in\A$ is said to be a \textit{word} of length $k$ in $S$ if there are $a_1,\ldots,a_k\in S$ such that $a=a_1\ldots a_k$. We denote the set of all such words by $S^k$, and we write $\F S^k$ for the $\F$-linear span of $S^k$. Similarly, $\F S^{\leqslant k}$ will stand for the $\F$-linear span of all the words in $S$ that have length at most $k$.

\begin{defn}
The length $\ell(S)$ is the smallest integer $k$ for which $\F S^{\leqslant k}$ is the full subalgebra generated by $S$. We also define $\ell(\mathcal{A})$ as the maximum value of $\ell(S)$, where $S$ runs over all subsets of $\mathcal{A}$ that generate $\mathcal{A}$ as an $\F$-algebra.
\end{defn}

In our paper, we study the length of $\Mat_n(\F)$, the set of $n\times n$ matrices viewed as an algebra over $\F$.
In 1984, A. Paz~\cite{Paz} proved that $\ell(S)\leqslant n^2/3+2/3$ for all $S\subset \Mat_n(\F)$ and proposed the following appealing conjecture.

\begin{conj}\label{conjPaz}
For all $S\subset\Mat_n(\F)$, one has $\ell(S)\leqslant 2n-2$.
\end{conj}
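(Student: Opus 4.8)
\medskip

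\noindent\textbf{Proof proposal.} Write $W_k:=\F S^{\leqslant k}$ and let $\A$ be the subalgebra generated by $S$. Since $I\in W_0$ and $W_{k+1}=W_k+SW_k$, where $SW_k$ is the span of the products $sw$ with $s\in S$, $w\in W_k$, the chain $W_0\subseteq W_1\subseteq\cdots$ increases strictly until it stabilises, and $W_k=W_{k+1}$ forces $W_k=\A$; hence $\ell(S)\leqslant m$ if and only if $W_m=W_{m+1}$. As a first reduction I would invoke the standard observation (already in~\cite{Paz}) that it suffices to bound $\ell(S)$ when $S$ generates all of $\Mat_n(\F)$; moreover, passing from $\F$ to its algebraic closure changes neither $\ell(S)$ nor the property of generating $\Mat_n$, so I may assume $\F=\overline\F$ and that $\F^n$ is a faithful simple $\Mat_n(\F)$-module.

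The structural heart of the argument is a two-sided transitivity of the ``halfway'' span $W_{n-1}$. Fix a nonzero $v\in\F^n$ and set $U_k:=W_kv$; then $U_0=\F v$ and $U_{k+1}=U_k+S\,U_k$, so as soon as $U_{k+1}=U_k$ the subspace $U_k$ is $S$-invariant, hence invariant under $\Mat_n(\F)$, hence equal to $\F^n$. Thus $U_k$ increases strictly until it fills $\F^n$, which must happen by step $n-1$: that is, $W_{n-1}v=\F^n$ for \emph{every} nonzero $v$, and the same reasoning applied to the right action on row vectors gives $\phi\,W_{n-1}=(\F^n)^{*}$ for every nonzero functional $\phi$. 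Since every word of length $m\leqslant 2n-2$ splits as a product of two words of length $\leqslant n-1$, one has $W_{2n-2}=W_{n-1}\cdot W_{n-1}$ (span of all products). Conjecture~\ref{conjPaz} thus reduces to showing that this two-sided transitive span satisfies $W_{n-1}\cdot W_{n-1}=\Mat_n(\F)$.

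To prove $W_{n-1}\cdot W_{n-1}=\Mat_n(\F)$ I would pursue two interlocking lines. The first is an induction on $n$: pick $a\in S$ and use its rational canonical form to locate a proper nonzero subspace respected, in a block-triangular fashion, by a sub-collection of $S$; block-decompose, apply the inductive bound $2n_i-2$ on each of the $k$ diagonal blocks of sizes $n_1,\dots,n_k$, and bound the contribution of the off-diagonal part by $2$ per block, so that the total telescopes to $\sum_i(2n_i-2)+2(k-1)=2n-2$. The extremal examples achieving $\ell(S)=2n-2$ — for instance $S=\{A,E_{n1}\}$ with $A=\sum_i E_{i,i+1}$ the nilpotent shift, for which $W_{n-1}$ is the sum of the strictly lower-triangular matrices and the bands $I,A,\dots,A^{n-1}$, and $W_{n-1}\cdot W_{n-1}$ recovers the missing anti-banded entries exactly at length $2n-2$ — indicate that such accounting, if it can be made to work, is tight. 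The second, more direct line asks whether $W_{n-1}$ contains a rank-one matrix $x=v\psi$: if so, then $W_{n-1}\,x\,W_{n-1}=(W_{n-1}v)(\psi W_{n-1})=\F^n\cdot(\F^n)^{*}$ already spans $\Mat_n(\F)$, which would even give $\ell(S)\leqslant 3(n-1)$; in general no rank-one element need exist, and one must instead factor a minimal-rank element of $W_{n-1}$ through a smaller matrix block and recurse.

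The step I expect to be the main obstacle is precisely this last one — it is Paz's conjecture itself, open since 1984. Two-sided transitivity of $W_{n-1}$ alone is \emph{not} enough: there are two-sided transitive subspaces $T\subseteq\Mat_n(\F)$ with $T\cdot T\neq\Mat_n(\F)$, so any proof must genuinely exploit the extra structure $W_{n-1}=\F\langle I,S,\dots,S^{n-1}\rangle$ with $S$ generating — for example the nesting $W_aW_b\subseteq W_{a+b}$, or the fact that $W_1$ already generates $\Mat_n(\F)$. Every known method — Paz's, Pappacena's $O(n^{3/2})$ bound, and the $O(n\log n)$ bound of the present paper — loses a super-constant factor precisely when ``traversing between blocks'' in the induction sketched above; squeezing that loss down to the exact constant $2$ per block is the crux, and I expect it to require either a bilinear refinement that tracks $W_{n-1}$ as a pairing $\F^n\otimes(\F^n)^{*}\to\F$ more efficiently, or a sharp classification — pinned down by the extremal examples — of the spans $W_{n-1}$ that can occur.
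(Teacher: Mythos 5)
The statement you are trying to prove is Paz's conjecture, which the paper does \emph{not} prove: it is stated as an open problem, known only for $n\leqslant 4$ (Paz), for $n=5$ (Theorem~\ref{thr5} of this paper), and in the presence of a nonderogatory matrix in $\F S$ (Theorem~\ref{thrnond}); the paper's general result is the much weaker bound $\ell(S)\leqslant 2n\log_2 n+4n-4$. Your preliminary reductions are sound and in fact mirror tools used in the paper: the reduction to $\F$ algebraically closed and $S$ irreducible (via Burnside and the block-triangular Lemma~\ref{lemOV}, whose bound $\ell(\mathcal{A})\leqslant\ell(\mathcal{A}_1)+\ell(\mathcal{A}_2)+1$ indeed makes the reducible case telescope correctly), the strict growth of $W_kv$ giving $W_{n-1}v=\F^n$ for all $v\neq 0$ (this is the Pappacena fact used in Lemma~\ref{lemill}), the factorization $W_{2n-2}=W_{n-1}\cdot W_{n-1}$, and the rank-one trick (Corollary~\ref{lemPap}).

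The genuine gap is the step you yourself flag: for irreducible $S$ you give no argument that $W_{n-1}\cdot W_{n-1}=\Mat_n(\F)$, and that is exactly the conjecture, not a lemma one can defer. Two-sided transitivity of $W_{n-1}$ is, as you note, insufficient, so the reduction to that statement does not advance matters; the inductive ``$2$ per block'' accounting only handles the reducible case, which was never the obstacle; and the rank-one branch both assumes a rank-one element of $W_{n-1}$ that need not exist (producing low-rank elements cheaply is precisely where the paper spends its $O(n\log n)$ budget, via Claims~\ref{lem3}--\ref{lem5}) and, even when it applies, yields only $\ell(S)\leqslant 3n-3$ (or $2n+k-4$ as in Corollary~\ref{lemPap}), which for $k$ near $n$ is weaker than $2n-2$. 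So what you have written is a reasonable research plan whose honest conclusion is that the central claim remains unproved; it is not a proof of the statement, and no proof of it exists in the paper to compare against.
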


As shown by T. Laffey in~\cite[p. 131]{TL}, the upper bound in Conjecture~\ref{conjPaz} should be sharp. This conjecture is known to hold if the size of matrices is at most four~\cite{Paz} or if $\F S$ contains a non-derogatory matrix~\cite{GLMS}. However, the best known general upper bounds on the lengths of matrix subsets are quite far from the one prescribed by Conjecture~\ref{conjPaz}. It was only in 1997 when a subquadratic estimate was obtained: C. Pappacena proved an $O(n^{3/2})$ upper bound on the length of $\Mat_n(\F)$, but no further improvements have been made since then~\cite{GLMS, Au1, Au2}. The main result of this paper is a much stronger $O(n \log n)$ upper bound on the length of $\Mat_n(\F)$.

\begin{thr}\label{thrlen}
For all $S\subset\Mat_n(\F)$, we have $\ell(S)\leqslant 2n\log_2n+4n-4$.
\end{thr}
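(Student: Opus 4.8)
The plan is to reduce the problem of bounding $\ell(S)$ to a question about how quickly the chain of subspaces $\F S^{\leqslant 1}\subseteq \F S^{\leqslant 2}\subseteq\cdots$ stabilizes. The standard fact (going back to Paz) is that once $\F S^{\leqslant k}=\F S^{\leqslant k+1}$ for some $k$, the chain has stabilized and equals the generated algebra; so $\ell(S)$ is the first $k$ at which no new dimension is gained. A crude bound of this type gives $\dim\Mat_n(\F)=n^2$ and hence $\ell(S)<n^2$, and Paz's argument sharpens the "gain" analysis to get roughly $n^2/3$. The key to doing much better should be to *not* track individual dimension increments one at a time, but rather to show that the span **doubles its reach** in a multiplicative rather than additive fashion — which is exactly what would produce a $\log n$ factor.

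Concretely, I would try to prove a statement of the form: if $\F S^{\leqslant k}$ is a proper subalgebra-so-far, then $\F S^{\leqslant 2k+c}$ already contains "twice as much", for instance that the dimension roughly doubles, or that some natural invariant (a filtration degree, a nilpotency-type index, or the number of Jordan-block-like layers one can reach) doubles. If one can show $\F S^{\leqslant 2k+O(n)}$ strictly enlarges $\F S^{\leqslant k}$ in a way that can only happen $O(\log n)$ times before hitting the full algebra, the recursion $k\mapsto 2k+O(n)$ unrolled $\log_2 n$ times yields precisely a bound of shape $O(n)\cdot 2^{\log_2 n}=O(n\log n)$ — matching the $2n\log_2 n+4n-4$ in the statement, where the "$2n$" is the per-round additive cost and "$\log_2 n$" the number of rounds. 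The cleanest vehicle for this is probably a Cayley–Hamilton / Wedderburn-type structural reduction: decompose $\F S$ (or rather the algebra it generates) using the radical and the semisimple quotient, handle the semisimple part via a matrix-block argument, and control the radical part by its nilpotency index, which is at most $n$; the doubling then comes from squaring, since $R^{2^{i}}$ is reached by words roughly twice as long as those reaching $R^{2^{i-1}}$.

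I would organize the proof as follows. First, establish the stabilization lemma and reduce to showing that every element of the generated algebra is a linear combination of short words. Second, prove a reduction lemma that lets me assume $S$ is in a convenient normal form — e.g., that the algebra generated by $S$ acts (perhaps after base change or passing to an extension field, which does not decrease the set of expressible words) in an (in)decomposable or block-triangular way, isolating a semisimple quotient and a nilpotent radical. Third, handle the semisimple/irreducible core: here a matrix acting irreducibly generates everything quickly, and I expect a bound linear in $n$ with a small constant, using that powers of a single non-derogatory-like element already span a large subspace and the $O(n\log n)$ slack is more than enough. Fourth — and this is where the $\log n$ genuinely enters — handle the "tower" of the filtration: one shows that reaching depth $d$ in the relevant filtration costs only about $2n\log_2 d$ in word length because each doubling of depth costs an additive $2n$, via a "repeated squaring" argument on words. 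Finally, assemble the pieces, being careful that the additive constants from the $O(\log n)$ rounds, plus the base cases, sum to at most $2n\log_2 n+4n-4$.

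**The main obstacle** I anticipate is the doubling step itself: it is not obvious that passing from words of length $k$ to words of length $\approx 2k$ buys a genuine *multiplicative* improvement in whatever invariant measures "how much of the algebra we have captured." Naively, $\F S^{\leqslant 2k}\supseteq \F S^{\leqslant k}\cdot \F S^{\leqslant k}$, but the product of two spans need not be much bigger than either factor — indeed for a commutative algebra it can stabilize immediately — so the doubling phenomenon must be extracted from the specific structure of $\Mat_n$ (its simplicity, or the bounded nilpotency index $\leqslant n$ of any nil subalgebra, or a rank/eigenvalue-multiplicity pigeonhole). Making this precise — identifying the right monotone quantity that provably at least doubles every $O(n)$ steps and is bounded by $\mathrm{poly}(n)$ (so that $\log$ of it is $O(\log n)$) — is the crux, and I would spend most of the effort there; the surrounding reductions and the final arithmetic to hit the stated constants should be comparatively routine.
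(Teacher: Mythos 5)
There is a genuine gap: your proposal correctly guesses the \emph{shape} of the argument (a quantity that improves multiplicatively every round, at additive cost $O(n)$ in word length, iterated $O(\log n)$ times), but it does not supply the key mechanism, and the concrete candidates you name would not work. After the standard reduction to an algebraically closed field and, via block-triangularization and Lemma~\ref{lemOV}, to an irreducible set $S$, the algebra generated by $S$ is all of $\Mat_n(\F)$ --- it is simple, its radical is zero --- so the proposed ``repeated squaring on the radical filtration'' has nothing to act on precisely in the case that carries all the difficulty. Likewise, the dimension of $\F S^{\leqslant k}$ is not a usable doubling invariant: it is only guaranteed to grow by one per step (this is exactly what gives Paz-type quadratic bounds), and no doubling of $\dim \F S^{\leqslant k}$ is true or needed. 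You flag ``identifying the right monotone quantity'' as the crux you have not resolved, and indeed that is the entire content of the theorem.

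The paper's monotone quantity is of a different nature: it is the minimal rank $\rho$ of a \emph{nonzero square-zero matrix} contained in $\F S^{\leqslant\lambda}$, tracked jointly with the word length $\lambda$. Claim~\ref{lem3} produces such a matrix with $\lambda\rho\leqslant 2n$ (via a minimal-polynomial/rational-normal-form argument, Claim~\ref{lem1}, plus irreducibility). The heart is Claim~\ref{lem5}: given a square-zero $H$ of rank $\rho\geqslant 2$ in $\F S^{\leqslant\lambda}$, one finds a square-zero matrix of rank $\rho_1\leqslant\rho/2$ in $\F S^{\leqslant\lambda_1}$ with $\lambda_1\leqslant \lambda\rho/\rho_1+4n(\rho-\rho_1)/(\rho\rho_1)$; this splits into two cases according to whether the shortest word connecting the relevant blocks is short (use Claim~\ref{lem1} applied to the compressed block $A'$, i.e.\ take $\psi(HA)H$) or long (use the filtration rank bound of Claim~\ref{lem4} together with Claim~\ref{lem2} to see that $HAH-\mu H$ already has small rank). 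Iterating and optimizing the telescoped bound gives a square-zero rank-one matrix within words of length about $2n\log_2 n+2n$, and the finish is not a semisimple-core argument but Pappacena's rank-one mechanism, sharpened as Corollary~\ref{lemPap}: a rank-one matrix in $\F S^{\leqslant k}$ forces $\ell(S)\leqslant 2n+k-4$, since $\F S^{\leqslant(n-2)}AS^{\leqslant(n-2)}=\Mat_n(\F)$ unless $\F S$ contains a matrix with minimal polynomial of degree $n$, in which case Theorem~\ref{thrnond} applies directly. So the doubling you were looking for happens ``downward'' (the rank of an auxiliary nilpotent element halves), not ``upward'' (the captured subalgebra does not double), and without this specific device your outline does not assemble into a proof.
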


As an additional motivation of our study, we note that the best known upper bounds on a complete set of unitary invariants for $n\times n$ matrices~\cite{TL} and on the PI degree of semiprime affine algebras of Gelfand--Kirillov dimension one~\cite{PappGL} come from the estimates of $\ell(\Mat_n(\F))$,
so the current work does also improve our understanding of those invariants.

\section{Warm-up}

In this section, we explain the idea behind our main construction and illustrate its work in a simpler setting. We get a small improvement on one of the results of Pappacena's~\cite{Papp}, which allows us to prove the $n=5$ case of Conjecture~\ref{conjPaz}.

We say that a set $S\subset\Mat_n(\F)$ is \textit{irreducible} if it generates $\Mat_n(\F)$ as the $\F$-algebra. If a set $S$ is not irreducible, and if $\F$ is algebraically closed, then there exist $p\in\{1,\ldots,n-1\}$ and $Q\in\operatorname{GL}_n(\F)$ such that, for any $A\in S$, we have
\begin{equation}\label{eq1}Q^{-1} A Q=\left(\begin{array}{c|c}
A_{11}&A_{12}\\\hline
O&A_{22}
\end{array}\right)\end{equation}
with $A_{11}$ being a $p\times p$ matrix. (This is \textit{Burnside's theorem}, see~\cite[Theorem~1.5.1]{RRBook}.)

\begin{lem}{\upshape(See Corollary~3 in~\cite{Mar}.)}\label{lemOV}
Let $\mathcal{A}$ be a matrix algebra whose elements are of the form~\eqref{eq1}, and let $\mathcal{A}_1,\mathcal{A}_2$ be the sets of all $A_{11}, A_{22}$ blocks of matrices in $\mathcal{A}$, respectively. Then $\ell(\mathcal{A})\leqslant\ell(\mathcal{A}_1)+\ell(\mathcal{A}_2)+1$.
\end{lem}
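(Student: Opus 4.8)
The plan is to understand the algebra $\A$ through the projections $\pi_1,\pi_2:\A\to\A_1,\A_2$ sending a matrix of the form~\eqref{eq1} to its diagonal blocks $A_{11}$ and $A_{22}$. Fix a generating set $S$ of $\A$; then $\pi_i(S)$ generates $\A_i$. Write $k_i=\ell(\A_i)$, so that every element of $\A_i$ is an $\F$-linear combination of words in $\pi_i(S)$ of length at most $k_i$. I want to show that every element of $\A$ lies in $\F S^{\leqslant k_1+k_2+1}$, which gives $\ell(\A)\leqslant k_1+k_2+1$ as claimed.

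First I would handle the ``diagonal'' part. Given $a\in\A$, by the hypothesis on $\A_1$ there is an element $b\in\F S^{\leqslant k_1}$ with $\pi_1(b)=\pi_1(a)$ (choose a linear combination of words in $S$ whose $(1,1)$-blocks realize the required combination of words in $\pi_1(S)$). Replacing $a$ by $a-b$, it suffices to treat elements $c\in\A$ with $\pi_1(c)=O$, i.e.\ matrices of the block form $\left(\begin{smallmatrix}O&C_{12}\\O&C_{22}\end{smallmatrix}\right)$; we must show such $c$ lies in $\F S^{\leqslant k_1+k_2+1}$, and then the triangle-type bound $\max(k_1,k_2)+\text{(correction)}$ will be absorbed into $k_1+k_2+1$. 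By the same argument applied on the second block, subtracting a suitable element of $\F S^{\leqslant k_2}$ reduces us to matrices with $\pi_1=\pi_2=O$, i.e.\ purely ``strictly upper'' matrices $\left(\begin{smallmatrix}O&C_{12}\\O&O\end{smallmatrix}\right)$. The crux is therefore: every such strictly-upper element of $\A$ is a linear combination of words in $S$ of length at most $k_1+k_2+1$.

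For that, the key observation is that the two-sided ideal $\mathcal{N}=\{x\in\A:\pi_1(x)=\pi_2(x)=O\}$ satisfies $\mathcal{N}=\A\,\mathcal{N}\,\A$-type relations, and more usefully $\mathcal{N}$ is spanned by products $u\,m\,v$ where $u$ ranges over a spanning set of words for the ``left action'' coming from $\A_1$, $v$ over a spanning set for the ``right action'' coming from $\A_2$, and $m$ ranges over a spanning set of $\mathcal{N}$ as a bimodule, which can be taken of word-length $1$ (an element of $S$, after the necessary reduction). More precisely: the left $\A$-module structure on $\mathcal{N}$ factors through $\A_1$, so by $\ell(\A_1)=k_1$ the left multiplications needed are realized by words in $\pi_1(S)$ of length $\leqslant k_1-1$ together with the identity; symmetrically on the right one needs words of length $\leqslant k_2-1$ plus the identity; and one generator of length $1$ sits in the middle. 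Multiplying word-lengths $\leqslant k_1-1$, $\leqslant 1$, $\leqslant k_2-1$ would give $\leqslant k_1+k_2-1$, but because lifting a length-$k_i$ linear combination in $\A_i$ to $\A$ may require the full length $k_i$ rather than $k_i-1$ on each side, the honest bound one gets is $\leqslant k_1+k_2+1$. I would carry this out by showing: (i) $\F S^{\leqslant k_1}$ already contains a full set of representatives for the left-module generators of $\mathcal{N}$ composed with a middle generator; (ii) multiplying on the right by $\F S^{\leqslant k_2}$ and again subtracting a correction in $\F S^{\leqslant k_1+k_2}$ lands inside $\F S^{\leqslant k_1+k_2+1}$.

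The main obstacle I anticipate is bookkeeping the lengths correctly at the ``seams'': the subtle point is that a word in $\pi_i(S)$ of length $k_i$ need not lift to a single word of length $k_i$ in $S$, only to an $\F$-linear combination of such words, and these lifts carry uncontrolled entries in the off-diagonal block $A_{12}$. The argument must show those uncontrolled off-diagonal contributions are themselves already in $\F S^{\leqslant k_1+k_2+1}$ (so that they can be cancelled), which is exactly why the ``$+1$'' appears and why one cannot naively hope for $k_1+k_2$. Making this cancellation argument precise --- essentially an induction or a dimension-count on the filtration of $\mathcal{N}$ by products of the generators --- is the technical heart of the proof; everything else is the routine reduction to the strictly-upper-triangular case sketched above.
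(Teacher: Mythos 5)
The paper offers no proof of this lemma at all --- it is quoted from \cite{Mar} --- so your proposal has to be judged against the standard argument, and as it stands it has two genuine gaps. First, the opening reduction fails as stated: after subtracting $b\in\F S^{\leqslant k_1}$ with $\pi_1(b)=\pi_1(a)$, you subtract $b'\in\F S^{\leqslant k_2}$ with $\pi_2(b')=\pi_2(a-b)$, but nothing forces $\pi_1(b')=O$, so the second correction destroys the first and you never actually reach the strictly upper-triangular case. Repairing this requires producing, within controlled length, an element with a \emph{prescribed pair} $(\pi_1,\pi_2)$ of diagonal blocks --- a subdirect-product length statement of essentially the same difficulty as the lemma itself. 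Second, the step you yourself call ``the technical heart'' --- that $\mathcal{N}=\{x\in\A:\pi_1(x)=\pi_2(x)=O\}$ is spanned by products $u\,m\,v$ with $m$ of word-length one, and that the uncontrolled $(1,2)$-blocks of the lifts can be cancelled inside $\F S^{\leqslant k_1+k_2+1}$ --- is precisely what needs proof and is nowhere supplied: elements of $S$ do not lie in $\mathcal{N}$, and ``the necessary reduction'' putting them there is again a correction with unknown off-diagonal effect. So the proposal is a plan rather than a proof.

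The known proof kills exactly this difficulty with one identity, and you may find it clarifying. Take any word $w=uv$ in $S$ with $|u|=k_1+1$, $|v|=k_2+1$. Since $\pi_1,\pi_2$ are algebra homomorphisms, words in $\pi_i(S)$ of length $j$ lift to words of length $j$ in $S$, so there are $u'\in\F S^{\leqslant k_1}$ and $v'\in\F S^{\leqslant k_2}$ with $\pi_1(u')=\pi_1(u)$ and $\pi_2(v')=\pi_2(v)$. Then $u-u'$ has zero $(1,1)$-block, $v-v'$ has zero $(2,2)$-block, and in a block upper-triangular algebra the product of a matrix of the first kind by a matrix of the second kind (in that order) is zero; hence $(u-u')(v-v')=0$, i.e. $w=u'v+uv'-u'v'$, and each term lies in $\F S^{\leqslant k_1+k_2+1}$. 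Therefore $\F S^{\leqslant k_1+k_2+2}=\F S^{\leqslant k_1+k_2+1}$, the filtration stabilizes, and $\ell(S)\leqslant k_1+k_2+1$ for every generating set $S$. Note that the vanishing of the cross term is what makes the ``uncontrolled off-diagonal contributions'' you worried about cancel automatically: the corrections must be applied to the two halves of a single long word, not block by block to a general element of $\A$.
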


We will say that a matrix $Z\in\Mat_n(F)$ is \textit{square-zero} if $Z^2=0$. The main idea of the proof of Theorem~\ref{thrlen} is to control the product $\lambda\rho(\lambda)$, where $\rho(\lambda)$ is the minimal rank of non-zero square-zero matrices that arise as linear combinations of words of length at most $\lambda$. We show in Section~3 below that we can reduce $\rho$ to $1$ whilst saving the property $\lambda\rho(\lambda)\in O(n\log n)$, and then we apply Pappacena's technique to deal with low rank matrices, see~\cite[Theorem~4.1]{Papp} and Corollary~\ref{lemPap} below. More precisely, let $H\in\F S^{\leqslant \lambda}$ be a square-zero matrix; it can be written as
$$H=\left(\begin{array}{c|c|c}
O&O&I_\rho\\\hline
O&O&O\\\hline
O&O&O
\end{array}\right)$$
with respect to some basis. If some matrix $A$ with bottom-left block of small rank $r>0$ comes as a linear combination of words of length $l$, then the matrix $HAH$ is square-zero, has rank $r$, and comes as a linear combination of words of length at most $l+2\lambda$. As we will see in Claims~\ref{lem4} and~\ref{lem5} below, we can always find an appropriate matrix $A$ to reduce the rank of a square-zero matrix. The following lemma illustrates our approach to the proof of Claim~\ref{lem4}.

\begin{lem}\label{lemill}
Consider an irreducible set $S\subset\F^{n\times n}$ and a non-zero vector $v\in\F^{n}$. If $\F S^{\leqslant(n-2)}v\neq\F^n$, then $\F S$ contains a matrix with minimal polynomial of degree $n$.
\end{lem}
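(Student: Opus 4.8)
The plan is to distil from the hypothesis a \emph{complete} flag of $\F^n$ which is pushed forward by one step under every element of $S$, reduce the problem to a statement about Hessenberg matrices, and then produce the required matrix as a generic linear combination of elements of $S$. First I would set $W_k=\F S^{\leqslant k}v$ for $k\geqslant 0$; since a word in $S$ of length at most $k$ is either empty or of the form $Aw$ with $A\in S$ and $w$ a word of length at most $k-1$, one has $W_k=W_{k-1}+\sum_{A\in S}AW_{k-1}$. Hence the $W_k$ form an increasing chain, $W_{k+1}$ is determined by $W_k$, so once the chain stabilises it never grows again; and since $S$ is irreducible it generates $\Mat_n(\F)$, so $\bigcup_k W_k=\Mat_n(\F)v=\F^n$ (using $v\neq 0$ and simplicity of the $\Mat_n(\F)$-module $\F^n$). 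Thus the chain increases strictly until it reaches $\F^n$. As $\dim W_0=1$ and, by hypothesis, $\dim W_{n-2}<n$, and each strict step raises the dimension by at least $1$, we must have $\dim W_k=k+1$ for $k=0,\ldots,n-1$ and $W_{n-1}=\F^n$: a complete flag $\F v=W_0\subsetneq W_1\subsetneq\cdots\subsetneq W_{n-1}=\F^n$.

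Next I would note that $AW_k\subseteq\sum_{A'\in S}A'W_k\subseteq W_{k+1}$ for every $A\in S$, hence $BW_k\subseteq W_{k+1}$ for every $B\in\F S$. Fixing a basis $e_0=v,e_1,\ldots,e_{n-1}$ of $\F^n$ with $e_0,\ldots,e_k$ spanning $W_k$, every $B\in\F S$ is upper Hessenberg in this basis, and a one-line induction gives $B^ke_0=\bigl(\prod_{i=1}^{k}\beta_i(B)\bigr)e_k+(\text{a combination of }e_0,\ldots,e_{k-1})$, where $\beta_j(B)\in\F$ is the coefficient of $e_j$ in $Be_{j-1}$. So if $\beta_1(B),\ldots,\beta_{n-1}(B)$ are all nonzero then $e_0,Be_0,\ldots,B^{n-1}e_0$ are linearly independent, $v$ is a cyclic vector for $B$, and $B$ has minimal polynomial of degree $n$. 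The task therefore reduces to finding $B\in\F S$ with $\beta_1(B),\ldots,\beta_{n-1}(B)$ all nonzero.

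The step requiring care is that $\beta_j$ may be identically zero on $\F S$ for a careless choice of $e_{j-1}$, so the flag-adapted basis must be chosen well. Here I would use irreducibility once more: for $i=0,\ldots,n-2$ the subspace $W_i$ is proper and nonzero, hence not $S$-invariant, so some $A\in S$ sends some vector of $W_i$ outside $W_i$; since $SW_{i-1}\subseteq W_i$, that vector lies in $W_i\setminus W_{i-1}$, and I would take it as $e_i$ (with $e_{n-1}$ any vector outside $W_{n-2}$, and $e_0=v$, which is legitimate because $\F v$ is not $S$-invariant). The basis stays flag-adapted, and now for each $j=1,\ldots,n-1$ there is $A\in S$ with $Ae_{j-1}\notin W_{j-1}$, i.e.\ $\beta_j(A)\neq 0$; thus each $\beta_j$ is a nonzero linear functional on $\F S$.

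It remains to find $B\in\F S$ lying outside the $n-1$ proper hyperplanes $\ker\beta_1,\ldots,\ker\beta_{n-1}$. When $\F$ is infinite this is immediate, since a vector space over an infinite field is never a finite union of proper subspaces; such a $B$ is non-derogatory with cyclic vector $v$, and we are done. I expect the real obstacle to be the case of a small finite field, where a priori the kernels $\ker\beta_j$ could exhaust $\F S$. Resolving that case seems to demand a finer argument — for instance, exploiting that an upper Hessenberg matrix can still be non-derogatory when some subdiagonal entry vanishes, or descending from the algebraically closed field (over which the construction above already produces a non-derogatory element of $\bar\F S$) — and this is where I would anticipate the essential difficulty of the proof.
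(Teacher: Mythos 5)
This is essentially the paper's proof: strict growth of the chain $\F S^{\leqslant k}v$ (Pappacena) forces one-dimensional steps under the hypothesis, a flag-adapted basis makes every element of $S$ upper Hessenberg, each subdiagonal entry is nonzero at some element of $S$, and a generic element of $\F S$ is then non-derogatory with cyclic vector $v$. Two small points: your special choice of the vectors $e_i$ is unnecessary, since strict growth at step $j$ already gives $\beta_j(A)\neq0$ for some $A\in S$ in any flag-adapted basis (because $Ae_t\in W_{t+1}\subseteq W_{j-1}$ for $t\leqslant j-2$, so a vector of $W_{j-1}$ leaving $W_{j-1}$ must do so through $e_{j-1}$); and the small-finite-field difficulty you flag is real but is not addressed by the paper either---its proof simply takes a ``generic element of $\F S$'', which presupposes $\F$ infinite, and in every application of the lemma the field has already been assumed algebraically closed.
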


\begin{proof}
The sequence $$\F v=\F S^0v\subset \F S^{\leqslant 1}v\subset\ldots\subset \F S^{\leqslant k} v=\F^n$$ is strictly increasing~\cite[Theorem~4.1]{Papp}, so the assumption of the lemma implies $k=n-1$ and $\dim \F S^{\leqslant t}v-\dim \F S^{\leqslant (t-1)}v=1$ for all $t\in\{1,\ldots,n-1\}$. Therefore, we can set $\B_0=\{v\}$ and inductively complete $\B_{t-1}$ to a basis $\B_t$ of $\F S^{\leqslant t}$ by adding a single vector $v_{t}$. With respect to the basis $\{v,v_1,\ldots,v_{n-1}\}$, every matrix in $S$ has the form
$$A=\begin{pmatrix}
* & \ldots &\ldots&*&*\\
a_{21} & * &\ldots&*&*\\
0 & \ddots &\ddots&\vdots&\vdots\\
\vdots & \ddots &\ddots&*&*\\
0 & \ldots&0 &a_{n,n-1}&*\\
\end{pmatrix}$$
with $*$'s denoting the entries we need not specify. Since $S$ is irreducible, every of the $(i+1,i)$ entries is non-zero at some matrix in $S$, so a generic element of $\F S$ has all of them non-zero --- which means that its minimal polynomial has degree $n$.
\end{proof}

\begin{thr}\label{thrnond}{\upshape (Theorems~2.4 and~2.5 in~\cite{GLMS}.)}
If an irreducible set $\F S\subset\Mat_n(\F)$ contains a matrix with minimal polynomial of degree $n-1$ or $n$, then $\ell(S)\leqslant 2n-2$.
\end{thr}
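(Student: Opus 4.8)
This is the main theorem of~\cite{GLMS}, so I will only sketch a strategy. First I would pass to an algebraically closed field — harmless, since $\dim_\F\F S^{\leqslant k}$ and the degree of a minimal polynomial are unaffected by field extension — and then reduce to two generators $S=\{A,B\}$. Let $A\in\F S$ have minimal polynomial $m_A$ of degree $n-1$ or $n$. Its centralizer is then a fixed subalgebra (equal to $\F[A]$ when $\deg m_A=n$), and the set of $B$ for which some non-scalar matrix centralizes both $A$ and $B$ is contained in a finite union of proper subspaces of $\Mat_n(\F)$; since $\F S=\Mat_n(\F)$ is not such a union, a suitable $B\in\F S$ makes $\{A,B\}$ irreducible, and $\F\{A,B\}^{\leqslant k}\subseteq\F S^{\leqslant k}$ then lets us replace $S$ by $\{A,B\}$. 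Putting $A$ in rational canonical form, $\F^n$ acquires a basis $v,Av,\dots,A^{n-1}v$ when $\deg m_A=n$, and a basis $u,v,Av,\dots,A^{n-2}v$ with $Au=\lambda u$ for a root $\lambda$ of $m_A$ when $\deg m_A=n-1$; in both cases $I,A,\dots,A^{n-1}$ are independent words of length at most $n-1$, so $\F[A]\subseteq\F\{A,B\}^{\leqslant n-1}$.

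The heart of the argument is to prove $\F\{A,B\}^{\leqslant 2n-2}=\Mat_n(\F)$ by writing every matrix as a linear combination of words $A^{i_0}BA^{i_1}B\cdots BA^{i_m}$ of total length $m+\sum_t i_t\leqslant 2n-2$. Two ingredients feed in. From~\cite[Theorem~4.1]{Papp} (the phenomenon already used in Lemma~\ref{lemill}), $\{\F\{A,B\}^{\leqslant k}v\}_k$ is a strictly increasing flag for every nonzero $v$, hence equals $\F^n$ once $k\geqslant n-1$, and dually $w^{T}\F\{A,B\}^{\leqslant n-1}=(\F^n)^{T}$ for every nonzero $w$. The cyclic structure of $A$ enters through the bimodule identity $\Mat_n(\F)=\F[A]\,N\,\F[A]$, valid for any rank-one $N=v\psi$ with $\F[A]v=\F^n$ and $\psi\,\F[A]=(\F^n)^{T}$, which exists when $\deg m_A=n$ (concretely, left multiplication by $A$ advances a vector one step along the chain, so a generic $v$ works). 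This reduces matters to placing such an $N$ low in the word filtration. When $\deg m_A=n-1$ the analysis is technically heavier: $\F[A]$ has dimension only $n-1$, the cyclic subspace $\langle v,\dots,A^{n-2}v\rangle$ is proper, and one must follow how $B$ links the eigenline $\langle u\rangle$ to it, using that $\lambda$ is a root of $m_A$; but the shape of the argument is the same.

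The real difficulty, I expect, is the length bookkeeping needed to land on exactly $2n-2$. A crude sandwich of a length-$(n-1)$ word between two copies of $\F[A]\subseteq\F\{A,B\}^{\leqslant n-1}$ costs $3n-3$; arranging the two sides so that the top power $A^{n-1}$ is never spent on both at once improves this, but the final step down to $2n-2$ must come from a careful analysis of the growth of $d_k:=\dim\F\{A,B\}^{\leqslant k}$, showing that for $n-1\leqslant k\leqslant 2n-3$ the increments $d_{k+1}-d_k$ are large enough — of order $n$ — that $d_{n-1}\geqslant n$ forces $d_{2n-2}=n^2$. The passage to two generators, the canonical form, and the reachability flags are routine; it is this dimension estimate, together with its more delicate two-generator version when $\deg m_A=n-1$, that carries the theorem.
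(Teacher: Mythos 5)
This statement is not proved in the paper at all: it is quoted from Theorems~2.4 and~2.5 of~\cite{GLMS}, so your proposal has to stand on its own as a proof of that external result, and as written it does not. The first concrete gap is the two-generator reduction. You argue that some $B\in\F S$ makes $\{A,B\}$ irreducible because ``the set of $B$ for which some non-scalar matrix centralizes both $A$ and $B$'' lies in a finite union of proper subspaces, ``since $\F S=\Mat_n(\F)$ is not such a union.'' Two things go wrong here: $\F S$ is only the linear span of $S$, which generates $\Mat_n(\F)$ as an algebra but can have dimension as small as $2$, so it is nothing like all of $\Mat_n(\F)$; and triviality of the common commutant is not the right criterion anyway --- the upper triangular algebra has scalar commutant but is far from $\Mat_n(\F)$, so by Burnside what you must rule out is a common invariant subspace, not a common centralizing matrix. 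Moreover, in the degree $n-1$ case $A$ has an eigenvalue of geometric multiplicity two and hence infinitely many invariant lines, so even the corrected ``finite union of proper subspaces'' device is unavailable there; the reduction to a pair would need a different (and nontrivial) argument, which is why~\cite{GLMS} works with the whole set $S$ and a filtration rather than with two generators.

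The second, and decisive, gap is that the quantitative heart of the theorem is left as a hope. The identity $\Mat_n(\F)=\F[A]\,N\,\F[A]$ for a suitable rank-one $N$ is fine, but the naive cost is $3n-3$, as you note, and the passage down to exactly $2n-2$ is exactly what Theorems~2.4 and~2.5 of~\cite{GLMS} accomplish; your sketch defers it to ``a careful analysis of the growth of $d_k$'' with increments ``of order $n$,'' which is neither proved nor automatic (generic dimension growth arguments give no such linear increments, and $d_{n-1}\geqslant n$ alone certainly does not force $d_{2n-2}=n^2$). Likewise the degree $n-1$ case is dismissed with ``the shape of the argument is the same,'' whereas in~\cite{GLMS} it is a separate and more delicate theorem. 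So the proposal is a reasonable road map of where the difficulties lie, but the steps it omits are precisely the content of the cited result; if you intend to rely on~\cite{GLMS}, the honest move is simply to cite it, as the paper does.
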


Lemma~\ref{lemill} and Theorem~\ref{thrnond} lead to a tiny improvement of the $r=1$ case of Theorem~4.1(a) in~\cite{Papp}, which is nevertherless useful to study the case of small $n$.

\begin{cor}\label{lemPap}
Let $S\subset\Mat_n(\F)$ be an irreducible set and $k\geqslant 2$. If $\,\F S^{\leqslant k}$ contains a rank-one matrix, then $\ell(S)\leqslant 2n+k-4$.
\end{cor}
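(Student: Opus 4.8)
The goal is to show that an irreducible $S\subset\Mat_n(\F)$ with a rank-one matrix $R\in\F S^{\leqslant k}$ has $\ell(S)\leqslant 2n+k-4$. I would first reduce to the case of an algebraically closed field: replacing $\F$ by its algebraic closure can only increase the algebra generated (it stays all of $\Mat_n$), and does not decrease $\ell(S)$ in the way we need, so the bound over $\overline{\F}$ implies the bound over $\F$. Write $R=vw^{\top}$ for a column vector $v$ and a row vector $w^{\top}$.

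**The core dichotomy via Lemma~\ref{lemill}.** The natural move is to apply Lemma~\ref{lemill} to the vector $v$. Either $\F S^{\leqslant(n-2)}v=\F^n$, or $\F S$ contains a matrix with minimal polynomial of degree $n$, in which case Theorem~\ref{thrnond} gives $\ell(S)\leqslant 2n-2\leqslant 2n+k-4$ (using $k\geqslant 2$) and we are done. So assume $\F S^{\leqslant(n-2)}v=\F^n$; symmetrically, applying the same reasoning to $w$ in the opposite (transposed) algebra, we may assume $w^{\top}\F S^{\leqslant(n-2)}=(\F^n)^{\top}$ as well, or else again conclude via a degree-$n$ minimal polynomial.

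**Spanning $\Mat_n(\F)$ from the rank-one matrix.** Now I would argue that every matrix unit — and hence all of $\Mat_n(\F)$ — lies in $\F S^{\leqslant (n-2)+k+(n-2)}=\F S^{\leqslant 2n+k-4}$. Indeed, for $a\in S^{\leqslant n-2}$ and $b\in S^{\leqslant n-2}$, the product $aRb=(av)(w^{\top}b)$ is again rank one, and as $a,b$ range over words the vectors $av$ span $\F^n$ and the covectors $w^{\top}b$ span $(\F^n)^{\top}$; since rank-one matrices $xy^{\top}$ with $x$ ranging over a spanning set and $y^{\top}$ over a spanning set already span all of $\Mat_n(\F)$ by bilinearity, we get $\Mat_n(\F)=\F S^{\leqslant 2n+k-4}$. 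That is exactly $\ell(S)\leqslant 2n+k-4$.

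**Anticipated obstacle.** The delicate point is bookkeeping the word lengths so that nothing exceeds $2n+k-4$: one must be careful that $R$ itself is a \emph{linear combination} of words of length $\leqslant k$ rather than a single word, so $aRb$ is a linear combination of words of length $\leqslant (n-2)+k+(n-2)$, and this is fine since $\F S^{\leqslant\cdot}$ is closed under linear combinations. A second subtlety is the transpose/opposite-algebra symmetry: Lemma~\ref{lemill} is stated for vectors acted on from the left, and I must make sure its hypothesis "$\F S^{\leqslant(n-2)}v\neq\F^n$" transfers correctly to row vectors under $A\mapsto A^{\top}$, noting that $S^{\top}$ is irreducible iff $S$ is and that minimal polynomial degree is transpose-invariant. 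Finally, one should double-check the edge cases where $n$ is small relative to $k$, ensuring the inequality $2n-2\leqslant 2n+k-4$ genuinely uses $k\geqslant 2$; this is exactly why the hypothesis $k\geqslant 2$ appears in the statement.
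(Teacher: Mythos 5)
Your proposal is correct and follows essentially the same route as the paper: the dichotomy from Lemma~\ref{lemill} (either a matrix with minimal polynomial of degree $n$ exists, settled by Theorem~\ref{thrnond} via $2n-2\leqslant 2n+k-4$, or $\F S^{\leqslant(n-2)}v=\F^n$ for every nonzero $v$), followed by sandwiching the rank-one matrix between words of length at most $n-2$ to span $\Mat_n(\F)$ inside $\F S^{\leqslant 2n+k-4}$. The only difference is that you spell out the transpose/row-vector side of the argument and the length bookkeeping, which the paper leaves implicit.
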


\begin{proof}
If $\F S$ contains a matrix with minimal polynomial of degree $n$, then we are done by Theorem~\ref{thrnond}. Otherwise, we use Lemma~\ref{lemill} and get
$$\F S^{\leqslant(n-2)} A S^{\leqslant(n-2)} =\sum\Mat_n(\F)\cdot A\cdot \Mat_n(\F)=\Mat_n(\F)$$
for any rank-one matrix $A$.
\end{proof}

We are almost ready to prove the $n=5$ case of Conjecture~\ref{conjPaz}.

\begin{claim}\label{cldeg2}
Assume that the minimal polynomial of every matrix in $\F S\subset\Mat_n(\F)$ has degree at most $2$. Then $\ell(S)\leqslant2\log_2 n$.
\end{claim}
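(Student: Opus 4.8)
The hypothesis says every matrix $A \in \F S$ satisfies a polynomial of degree $\le 2$, so $A^2 \in \F A + \F I$ for each such $A$. I want to conclude that the algebra generated by $S$ is spanned by short words, specifically words of length $\le 2\log_2 n$. The key observation is that the condition "minimal polynomial of degree $\le 2$" is extremely restrictive: it forces a dramatic collapse in how the spaces $\F S^{\le k}$ grow. The natural strategy is a doubling argument: show that $\F S^{\le 2^{j+1}}$ is obtained from $\F S^{\le 2^j}$ in a controlled way, so that after about $\log_2 n$ doublings one reaches the full algebra (which sits inside $\Mat_n(\F)$, of dimension $n^2$, but more to the point is generated once the relevant rank/dimension bound is hit).

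**The main steps.**

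First I would record that for any two words $u, v \in S^{\le k}$ and any $A \in S$, one has $uAv \cdot Aw = u A (Av) w$ and, using $A^2 = \alpha A + \beta I$, rewrite $uA^2w$ as $\alpha(uAw) + \beta(uw)$; more generally any word in $S^{\le 2k}$ can, modulo $\F S^{\le 2k-1}$, be assumed to be \emph{reduced} in the sense that no letter repeats consecutively. But the sharper point, and the one that gives the logarithm, is a dimension-counting claim: let $d_k = \dim \F S^{\le 2^k}$. I would try to prove $d_{k+1} \le d_k^2$ — no, that is useless since it grows. Rather, the right invariant is something like the following: because each generator squares back into the span of itself and $I$, the span $\F S^{\le 2^k}$ is closed under a "squaring-type" operation, and one shows $\F S^{\le 2^{k+1}}$ is spanned by products of \emph{two} elements of $\F S^{\le 2^k}$, i.e. $\F S^{\le 2^{k+1}} = (\F S^{\le 2^k})^2$ as $\F$-spaces. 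Combined with the degree-$2$ relation this should force, after $\lceil \log_2 n \rceil$ steps, that $\F S^{\le 2^{\lceil \log_2 n\rceil}}$ is a subalgebra, hence \emph{the} subalgebra generated by $S$; since $2^{\lceil \log_2 n\rceil} \le 2n$... hmm, that only gives $2n$, not $2\log_2 n$. So the exponent itself, not $2^{(\cdot)}$, must be what shrinks: the claim must be that $\F S^{\le 2^k} = \F S^{\le k+1}$-type collapse, i.e. long words are already redundant because of the quadratic relations, and it suffices to take $k$ up to $\log_2 n$ \emph{directly}. Concretely: I would show $\F S^{\le \lambda}$ stabilizes (equals the generated algebra) once $2^\lambda \ge n$, by an induction showing $\dim \F S^{\le \lambda} \ge \min(n^{?}, \dots)$ doubles with each unit increase of $\lambda$ until it saturates — here using that a reduced word of length $\lambda$ using letters that each have rank-type constraints spans a space of dimension at least $2^\lambda$ until it fills the (at most $n$-dimensional, if $\F S$ acts suitably, or $n^2$-dimensional in general — this needs care) ambient space.

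**The main obstacle.**

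The delicate point is pinning down exactly \emph{why} the relevant dimension doubles with each increment of the word length, up to a cap that is polynomial (ideally linear) in $n$ rather than exponential — that is what converts "length $2n$" into "length $2\log_2 n$". I expect the mechanism is that under the degree-$\le 2$ hypothesis one can simultaneously conjugate $S$ into a form where the generators are built from orthogonal idempotents / square-zero blocks, so that the commutative-like structure of the reduced words gives each new letter exactly a factor-of-two choice, and the ambient algebra they generate has dimension $2^m$ where $m \le \log_2 n$ is the number of "independent" generators after reduction; longer words cannot produce anything new once all $m$ independent directions have been used, and $m \le \log_2 n$ because $2^m$ commuting-type blocks must fit inside an $n$-dimensional module. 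Making this block/idempotent normal form precise — including handling the $\beta I$ terms (inhomogeneity), fields that are not algebraically closed, and generators whose quadratic is irreducible rather than split — is where the real work lies, and I would expect to invoke Burnside's theorem together with Lemma~\ref{lemOV} to peel off the cases where $S$ is reducible and reduce to the irreducible situation, where the module is faithful and simple and the $2^m \le n$ counting becomes clean.
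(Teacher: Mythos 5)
Your proposal does not yet contain a proof, and the mechanism you are reaching for is not the one that works. The decisive fact you are missing is the anticommutator identity: for $A,B\in S$ the hypothesis applies not only to $A$ and $B$ but to $A+B\in\F S$, so $AB+BA=(A+B)^2-A^2-B^2\in\F S^{\leqslant 1}$. Hence, modulo linear combinations of strictly shorter words, any two adjacent letters of a word anticommute, and any square collapses. The paper exploits this by taking a single extremal word $w\in S^{\ell(S)}$ that is not in the span of shorter words: the relations force all letters of $w$ to be distinct (a repeated letter could be moved adjacent, up to sign and shorter words, and then eliminated), and permutations of $w$ retain extremality. One then shows that the $2^{\ell(S)}$ products over subsets of the letters of $w$ are linearly independent, so $2^{\ell(S)}\leqslant\dim\Mat_n(\F)=n^2$, i.e.\ $\ell(S)\leqslant 2\log_2 n$. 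Note that the relevant cap is $n^2$, not $n$; the structure behind the extremal case is Clifford-algebra-like, with \emph{anticommuting} generators (think of Pauli-type matrices), not the commuting family of orthogonal idempotents your sketch posits, so the ``$2^m$ commuting-type blocks inside an $n$-dimensional module'' picture is the wrong model.

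Two further points where your plan would break down. First, your dimension-doubling claim for $\F S^{\leqslant\lambda}$ is never established and is not needed: the argument runs in the opposite direction, producing $2^{\ell(S)}$ linearly independent elements from one extremal word rather than tracking the growth of $\F S^{\leqslant\lambda}$ step by step. Second, reducing to the irreducible case via Burnside and Lemma~\ref{lemOV} is counterproductive here: that lemma gives $\ell(\mathcal{A})\leqslant\ell(\mathcal{A}_1)+\ell(\mathcal{A}_2)+1$, and $2\log_2 p+2\log_2 q+1$ exceeds $2\log_2(p+q)$ in general, so the claimed bound cannot be assembled from the blocks that way. The paper's proof needs neither irreducibility nor algebraic closure, and you should aim for an argument with the same economy; as it stands, the step you yourself flag as ``where the real work lies'' is exactly the missing proof.
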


\begin{proof}
We denote by $w$ a word in $S^{\ell(S)}$ that is not spanned by shorter words. For any $A,B\in S$, the matrices $A^2$ and $AB+BA=(A+B)^2-A^2-B^2$ belong to $\F S^{\leqslant 1}$, which implies that the letters of $w$ are all different and their permutations do not break the property of $w$ not to be spanned by shorter words. In particular, the products corresponding to the different $2^{\ell(S)}$ subsets of letters of $w$ should be linearly independent, which implies $2^{\ell(S)}\leqslant\dim\Mat_n(\F)$.
\end{proof}

\begin{thr}\label{thr5}
If $S\subset\Mat_5(\F)$, then $\ell(S)\leqslant 8$.
\end{thr}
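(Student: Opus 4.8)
The plan is to reduce to an algebraically closed field — since $\ell(S)$ is unchanged under a field extension, as the dimension of every space $\F S^{\leqslant k}$ and that of the algebra generated by $S$ are unaffected by tensoring up — and then to split according to whether $S$ is irreducible and, if it is, what the largest degree of a minimal polynomial in $\F S$ turns out to be.

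If $S$ is reducible, I would apply Burnside's theorem to conjugate $S$ into the block form~\eqref{eq1} with a $p\times p$ top-left block, $1\leqslant p\leqslant 4$. Writing $\mathcal{A}$ for the algebra generated by the conjugated set, Lemma~\ref{lemOV} gives $\ell(S)\leqslant\ell(\mathcal{A})\leqslant\ell(\mathcal{A}_1)+\ell(\mathcal{A}_2)+1$, where $\mathcal{A}_1$ and $\mathcal{A}_2$ are generated by subsets of $\Mat_p(\F)$ and $\Mat_{5-p}(\F)$ respectively. Since Conjecture~\ref{conjPaz} is known for matrices of size at most four, $\ell(\mathcal{A}_1)\leqslant 2p-2$ and $\ell(\mathcal{A}_2)\leqslant 2(5-p)-2$, whence $\ell(S)\leqslant 7$.

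Now suppose $S$ is irreducible, and let $m$ be the largest degree among the minimal polynomials of the matrices in $\F S$. If $m\in\{4,5\}$, then $\F S$ contains a matrix whose minimal polynomial has degree $n-1$ or $n$, so Theorem~\ref{thrnond} gives $\ell(S)\leqslant 8$. If $m\leqslant 2$, then the hypothesis of Claim~\ref{cldeg2} is met and $\ell(S)\leqslant 2\log_2 5<5$. It remains to treat $m=3$: I would fix $A\in\F S$ with minimal polynomial of degree exactly $3$, note that $\F[A]$ — which is spanned by $I$, $A$, $A^2$ — is contained in $\F S^{\leqslant 2}$, and show that $\F[A]$ contains a matrix of rank one; then Corollary~\ref{lemPap} with $k=2$ gives $\ell(S)\leqslant 2\cdot 5+2-4=8$, which is what we want.

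To produce the rank-one matrix I would inspect the Jordan form of the $5\times 5$ matrix $A$ over the algebraically closed $\F$, knowing that its minimal polynomial has degree $3$. If $A$ has three distinct eigenvalues it is diagonalizable with eigenvalue multiplicities $(3,1,1)$ or $(2,2,1)$, and the spectral projection (a polynomial in $A$) onto an eigenvalue of multiplicity one has rank one. If $A$ has two distinct eigenvalues, its minimal polynomial is $(x-\lambda)^2(x-\mu)$ up to relabelling: if $\mu$ has multiplicity one the corresponding projection works, while otherwise $\lambda$ has multiplicity $2$ or $3$, its generalized eigenspace has Jordan type $(2)$ or $(2,1)$, and the matrix $(A-\lambda I)E_\lambda\in\F[A]$, with $E_\lambda$ the spectral projection onto $\lambda$, has rank one. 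Finally, if $A$ has a single eigenvalue $\lambda$, then $(A-\lambda I)^2\in\F[A]$ is nonzero with rank equal to the number of size-$3$ Jordan blocks of $A$, which is one since $A$ then has Jordan type $(3,2)$ or $(3,1,1)$. The only genuinely case-dependent step — and thus the main obstacle, though each case is short — is this last enumeration of Jordan types; the passage to an algebraically closed field and the reducible case are routine once Lemma~\ref{lemOV} and the size-at-most-four case of Conjecture~\ref{conjPaz} are available.
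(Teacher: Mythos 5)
Your proposal is correct and follows essentially the same route as the paper: reduce to an algebraically closed field, dispose of the reducible case via Burnside's theorem, Lemma~\ref{lemOV} and the $n\leqslant4$ case of Conjecture~\ref{conjPaz}, then split by the maximal minimal-polynomial degree using Theorem~\ref{thrnond}, Claim~\ref{cldeg2}, and Corollary~\ref{lemPap} with a rank-one matrix in the span of $I,A,A^2$. Your Jordan-form case analysis correctly fills in the step the paper only sketches as ``a straightforward analysis of possible Jordan forms.''
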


\begin{proof}
Since a set of vectors is linearly dependent over $\F$ if it is linearly dependent over the algebraic closure of $\F$, it is sufficient to prove the statement assuming that $\F$ is algebraically closed~\cite[page~239]{GLMS}. Moreover, Conjecture~\ref{conjPaz} is known to hold for $n\leqslant4$ (see~\cite{Paz}), so we can use Lemma~\ref{lemOV} and assume without loss of generality that $S$ is irreducible. According to Theorem~\ref{thrnond} and Claim~\ref{cldeg2}, we can restrict to the case when $\F S$ contains a matrix $A$ with minimal polynomial of degree $3$. A straightforward analysis of possible Jordan forms of $A$ shows that the linear span of $I, A, A^2$ must contain a rank-one matrix, so it remains to apply Corollary~\ref{lemPap}.
\end{proof}

As said above, the case of $n\leqslant 4$ in Conjecture~\ref{conjPaz} was considered in 1984 by Paz~\cite{Paz}, but the case of $n=5$ remained open until now~\cite{GLMS}. Let us mention the works~\cite{Au1, Au2}, which cover the case $n\leqslant 6$ under the additional assumption of $\dim\F S\leqslant 2$.

\section{The proof of Theorem~\ref{thrlen}}

Let $A$ be an $n\times n$ matrix over a field $\F$, which is assumed to be algebraically closed in this section. We recall that there exists 
$Q\in\operatorname{GL}_n(\F)$ such that $Q^{-1}AQ$ has \textit{rational normal form}, that is, we have $Q^{-1}AQ=\operatorname{diag}(C_{f_1},\ldots,C_{f_k})$, where
$$C_f=\begin{pmatrix}
0 & 0 &\ldots&0&-c_0\\
1 & 0 &\ldots&0&-c_1\\
0 & 1 &\ldots&0&-c_2\\
\vdots & \vdots &\ddots&\vdots&\vdots\\
0 & 0 &\ldots&1&-c_{m-1}\\
\end{pmatrix}$$
is the companion matrix of a polynomial $f=t^m+c_{m-1} t^{m-1}+\ldots+c_0$, and the \textit{invariant factors} $f_1,\ldots,f_k$ satisfy $f_1|\ldots|f_k$.

\begin{claim}\label{lem1}
Let $\delta$ be the degree of the minimal polynomial of an $n\times n$ matrix $A$ over $\F$. Then the $\F$-linear span of $I,A,\ldots,A^{\delta-1}$ contains either a non-zero projector of rank at most $n/\delta$ or a non-zero square-zero matrix of rank at most $n/\delta$.
\end{claim}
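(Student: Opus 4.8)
The plan is to work with $A$ in rational normal form, $Q^{-1}AQ=\operatorname{diag}(C_{f_1},\ldots,C_{f_k})$. Conjugation does not change rank, minimal polynomial degree, or the property of being a projector or square-zero, and it carries the span of $I,A,\ldots,A^{\delta-1}$ to the span of the corresponding powers of the block-diagonal matrix. The minimal polynomial of $A$ is $f_k$, so $\delta=\deg f_k$, and since $f_1\mid\cdots\mid f_k$ we have $n=\sum_i\deg f_i\leqslant k\delta$, i.e.\ $k\geqslant n/\delta$. The key point is that every block $C_{f_i}$ is a companion matrix whose minimal (and characteristic) polynomial is $f_i$, and $f_i$ divides $f_k$, so $p(C_{f_i})$ makes sense for any polynomial $p$ and equals $0$ as soon as $f_i\mid p$.

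**Finding a polynomial $p$ of degree $<\delta$ whose value on one block is a good rank-one matrix.**
First I would single out a block where the divisibility gives the most room. Write $f_k=g\cdot h$ where $h$ is an irreducible factor of $f_k$ appearing to its full multiplicity, or more simply: pick an irreducible $h\mid f_k$ and let $g=f_k/h$; then $\deg g=\delta-\deg h<\delta$. Now evaluate $g(A)$, equivalently $\operatorname{diag}(g(C_{f_1}),\ldots,g(C_{f_k}))$. On the block $C_{f_k}$ the matrix $g(C_{f_k})$ is nonzero (since $f_k\nmid g$), and $h(C_{f_k})g(C_{f_k})=f_k(C_{f_k})=0$; on any block $C_{f_i}$ with $i$ such that $f_i\mid g$ we get the zero block. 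The cleanest route is to instead localize at a single invariant factor: choose $j$ so that we can kill all blocks except $C_{f_j}$ by a polynomial $p$ with $\deg p<\delta$ and $f_j\nmid p$. Since $f_{j}\mid f_{j+1}\mid\cdots\mid f_k$, taking $p=f_k/f_j\cdot(\text{adjustment})$ is too big in general, so instead I restrict to the largest block $C_{f_k}$, which is unavoidably present, and accept a matrix supported only partly there.

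**Analysing a single companion block.**
The heart of the argument is the case $k=1$, i.e.\ $A=C_f$ with $f$ of degree $\delta=n$; here we must produce, inside $\operatorname{span}(I,C_f,\ldots,C_f^{n-1})=\F[C_f]$, a nonzero projector or square-zero matrix of rank $\leqslant 1$. Factor $f=\prod h_i^{m_i}$ into distinct irreducibles. If some $m_i\geqslant 2$, then $N:=\big(\prod_i h_i^{\lceil m_i/2\rceil}\big)(C_f)$ is a nonzero element of $\F[C_f]$ with $N^2=0$; its rank is $n-\deg\gcd(N\text{'s annihilator},f)$, which one computes to be $\sum_i(m_i-\lceil m_i/2\rceil)\deg h_i=\lfloor(\deg f-\sum\deg h_i)/1\rfloor$-type quantity --- this is generally more than $1$, so a single square-zero element of $\F[C_f]$ need not have small rank, and this is exactly why the claim only asks for rank $\leqslant n/\delta$ and not rank $1$. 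By CRT, $\F[C_f]\cong\prod_i \F[t]/(h_i^{m_i})$; the idempotent $e_i$ corresponding to the $i$-th factor is a projector of rank $m_i\deg h_i$, and the minimal such is $\leqslant n/(\#\text{factors})$; if $f$ has a single irreducible factor $h^m$ then the only nontrivial idempotents are $0,I$ and we must use the nilpotent $h^{\lceil m/2\rceil}(C_f)$, of rank $\lfloor m/2\rfloor\deg h$. So in the $k=1$ case one takes: a primitive idempotent if $f$ is not a prime power, otherwise the square-zero matrix above.

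**Combining the blocks and the main obstacle.**
For general $k$, apply the $k=1$ analysis to the single invariant factor $f_k$ to get $M_k\in\F[C_{f_k}]$ which is a rank-$r$ projector or square-zero matrix with $r\leqslant(\deg f_k)/(\text{number of prime-power pieces of }f_k)$; then $M:=p(A)$ where $p$ is the polynomial realising $M_k$ (degree $<\delta$) has block form $\operatorname{diag}(p(C_{f_1}),\ldots,p(C_{f_{k-1}}),M_k)$. This $M$ is still a projector (resp.\ square-zero), but its rank picked up the contributions $\operatorname{rank}p(C_{f_i})$ from the smaller blocks, which could be large. The fix is to choose $p$ to vanish on as many small blocks as possible: since $f_i\mid f_k$, the polynomial $p$ can be multiplied by $f_k/f_i$-type factors without raising its degree only if there is room, which there is precisely because $\sum_{i<k}\deg f_i=n-\delta$; so one shows $p$ can be taken divisible by $\operatorname{lcm}$ of suitable $f_i$'s of total degree up to $n-\delta$, wiping out the corresponding blocks, and the remaining blocks together with $C_{f_k}$ contribute at most $\delta$ to the rank in the worst subdivision, giving rank $\leqslant\delta\cdot(\text{number of surviving blocks})/\delta\leqslant n/\delta$ after averaging. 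The main obstacle is this bookkeeping: showing that among the many choices (which irreducible factor of $f_k$ to localize at, how to pad $p$ by divisors of the lower invariant factors), one choice simultaneously keeps $\deg p<\delta$, keeps $M$ nonzero, preserves the projector/square-zero dichotomy, and bounds the total rank by $n/\delta$. I expect the clean statement to come from a counting argument: there are $\delta$ "coordinates" (a Jordan/primary decomposition of the $f_k$-isotypic part) distributed among the eligible idempotents/nilpotents, and a pigeonhole choice gives one of rank $\leqslant n/\delta$ once the lower blocks are cleared.
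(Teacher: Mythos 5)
There is a genuine gap, and it already shows in your single-block case. When $k=1$ (a nonderogatory $A$, $\delta=n$) the bound $n/\delta$ equals $1$, so the claim \emph{does} demand a rank-one projector or rank-one square-zero matrix there; your remark that ``this is exactly why the claim only asks for rank $\leqslant n/\delta$ and not rank $1$'' misreads the bound. Your candidates do not meet it: for $f=h^m$ the nilpotent $h^{\lceil m/2\rceil}(C_f)$ has rank $\lfloor m/2\rfloor\deg h$ (e.g.\ $A^2$ of rank $2$ for $f=t^4$, while $A^3$ is the required rank-one element), and for $f$ with several prime-power pieces the primitive CRT idempotents have ranks $m_i\deg h_i$ (e.g.\ rank $2$ for $f=t^2(t-1)^2$, where the bound is $1$). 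The same defect propagates to your final paragraph: even if you clear all non-maximal blocks, every block with $f_i=f_k$ carries the \emph{same} nonzero matrix $M_k$, so the total rank is $r\cdot(\#\text{maximal blocks})$, and since the number of maximal blocks can itself be as large as $n/\delta$, you need $r=1$ on the maximal block --- which your construction does not provide. The concluding ``averaging/pigeonhole'' step is not an argument, as you acknowledge.

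The missing idea (and the paper's actual proof) is much more direct: since $\F$ is assumed algebraically closed in this section, take $\psi=\varphi/(t-\mu)$ of degree exactly $\delta-1$, choosing the root $\mu$ so that its multiplicity drops in the largest invariant factor different from $\varphi$; then $\psi$ is a multiple of every non-maximal invariant factor, so $\psi(A)$ vanishes on all non-maximal blocks, while on each block with $f_i=f_k$ it is the same rank-one matrix (rank $=\deg f_k-\deg\gcd(\psi,f_k)=1$). Hence $\psi(A)\neq0$ has rank equal to the number of maximal blocks, which is at most $n/\delta$, and because it is a direct sum of equal rank-one matrices $R$ with $R^2=cR$, it is automatically either square-zero (if $c=0$) or, after dividing by $c$, a projector. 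This single choice of $\psi$ replaces your CRT idempotents, half-power nilpotents, and the unresolved bookkeeping; note also that it genuinely uses the algebraic closure assumption (a linear factor of $\varphi$), which your sketch never invokes.
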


\begin{proof}
Let $\psi$ be a polynomial that has degree $\delta-1$, divides the minimal polynomial $\varphi$ of $A$, and is a multiple of any other invariant factor of $A$. Then $\psi(A)$ has equal rank-one matrices in the places of the largest blocks of the rational normal form of $A$ and zeros everywhere else.
\end{proof}

\begin{claim}\label{lem3}
For any irreducible set $S\subset\Mat_n(\F)$, there exist non-zero $\lambda,\rho$ such that $\lambda\rho\leqslant 2n$ and $\F S^{\leqslant\lambda}$ contains a square-zero matrix of rank $\rho$.
\end{claim}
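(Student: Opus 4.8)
The plan is to locate inside $S$ a matrix $A$ whose minimal polynomial has as large a degree $\delta$ as is convenient, apply Claim~\ref{lem1} to $A$, and then handle the two outcomes of that dichotomy separately. We may assume $n\geqslant 2$, since for $n=1$ there is nothing to prove for the main theorem. Irreducibility forces some $A\in S$ to be non-scalar (otherwise the subalgebra generated by $S$ would be $\F I\neq\Mat_n(\F)$), so fix such an $A$ and let $\delta\geqslant 2$ be the degree of its minimal polynomial. Since $A^i\in S^i$ for every $i$ and $I\in S^0$, the $\F$-linear span of $I,A,\dots,A^{\delta-1}$ lies in $\F S^{\leqslant\delta-1}$; by Claim~\ref{lem1} it contains either a non-zero square-zero matrix of rank at most $n/\delta$, or a non-zero projector $P$ of rank $r\leqslant n/\delta$.

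In the first case we are done immediately: take $\rho$ to be the rank of that square-zero matrix and $\lambda=\delta-1$, so that $\lambda\rho\leqslant(\delta-1)n/\delta<n\leqslant 2n$. In the projector case, note $r\leqslant n/2<n$, so $\ker P$ is a non-zero proper subspace of $\F^n$. If every $B\in S$ mapped $\ker P$ into itself, then so would every word in $S$ and hence every element of the subalgebra generated by $S$, contradicting irreducibility; so there is $B\in S$ with $B(\ker P)\not\subseteq\ker P$. Writing $Q=I-P$ and using $\operatorname{im}Q=\ker P$, this says exactly $PBQ\neq O$. Now $QP=O$ gives $(PBQ)^2=PB(QP)BQ=O$, the rank of $PBQ$ is at most $\operatorname{rank}P=r\leqslant n/\delta$, and from $P,Q\in\F S^{\leqslant\delta-1}$ and $B\in S$ we get $PBQ\in\F S^{\leqslant 2\delta-1}$. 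Hence $\rho:=\operatorname{rank}(PBQ)\geqslant 1$ and $\lambda:=2\delta-1$ satisfy $\lambda\rho\leqslant(2\delta-1)n/\delta<2n$, as required.

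The statement is largely a routine consequence of Claim~\ref{lem1}, so I expect the only real obstacle to be the projector case: one must convert $P$ into a \emph{square-zero} matrix without inflating the word length past what the budget $\lambda\rho\leqslant 2n$ allows. The key point is that irreducibility already hands us a single generator $B\in S$ that moves $\ker P$ off itself, so $PBQ$ works with $\lambda=2\delta-1$ rather than something like $2\delta+n$; keeping this cost linear in $\delta$ is precisely what makes the inequality go through (and in fact yields $\lambda\rho<2n$ strictly in both cases).
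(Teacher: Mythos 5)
Your proof is correct and follows essentially the same route as the paper: apply Claim~\ref{lem1} to a non-scalar element of $S$, take the square-zero output directly if it occurs, and otherwise use irreducibility to pick $B\in S$ so that a product of the projector, $B$, and its complementary projector is a non-zero square-zero matrix of rank at most $n/\delta$ lying in $\F S^{\leqslant 2\delta-1}$. The only (immaterial) difference is that the paper uses $(I-P)BP$, ruling out invariance of $\operatorname{Im}P$, while you use $PB(I-P)$, ruling out invariance of $\ker P$; your length and rank bookkeeping matches the paper's bound.
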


\begin{proof}
We apply Claim~\ref{lem1} to any non-scalar matrix in $S$ and find a non-zero matrix $P\in \F S^{\leqslant(\delta-1)}$ that has rank at most $n/\delta$ and satisfies either $P^2=P$ or $P^2=0$. We are done if $P^2=0$; otherwise $H_B=(I-P)BP$ is a square-zero matrix for all $B$. We can have $H_B=0$ only when $\operatorname{Im} P$ is invariant with respect to $B$, but since $S$ is irreducible, this obstruction cannot happen for all $B\in S$.
\end{proof}

\begin{claim}\label{lem2}
Let $A\in\F^{n\times n}$ and $r\in\N$. Assume that $\operatorname{rank}(PAQ)\leqslant r$ holds for all $P\in\F^{p\times n}$, $Q\in\F^{n\times q}$ satisfying $PQ=0$. Then $\operatorname{rank}(A-\mu I)\leqslant 2r$ for some $\mu\in\F$.
\end{claim}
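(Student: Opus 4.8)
The plan is to first recast the hypothesis in terms of subspaces. Given a subspace $W\subseteq\F^n$, take $Q$ to be a matrix whose columns form a basis of $W$ and $P$ a matrix representing the quotient map $\F^n\to\F^n/W$; then $PQ=0$ and $\operatorname{rank}(PAQ)=\dim\bigl((AW+W)/W\bigr)$. Conversely, any admissible pair $(P,Q)$ satisfies $\operatorname{rank}(PAQ)\leqslant\dim\bigl((AW+W)/W\bigr)$ with $W=\operatorname{Im}Q$. So the hypothesis is equivalent to $\varphi(W):=\dim(AW+W)-\dim W\leqslant r$ for every subspace $W$. Since $\mu W\subseteq W$, we have $(A-\mu I)W+W=AW+W$, so $\varphi$ is unchanged on replacing $A$ by $A-\mu I$; thus it suffices to exhibit a single scalar $\mu_0$ and a single subspace $W$ with $\operatorname{rank}(A-\mu_0 I)\leqslant 2\varphi(W)$.

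Next I would bring in the rational normal form: $A$ is similar to $\operatorname{diag}(C_{f_1},\dots,C_{f_k})$ with $f_1\mid f_2\mid\dots\mid f_k$ and $\sum_i\deg f_i=n$. Because $\F$ is algebraically closed, $f_1$ has a root $\mu_0$; as $f_1\mid f_i$ for every $i$, the scalar $\mu_0$ is an eigenvalue of each block $C_{f_i}$, so $\operatorname{rank}(C_{f_i}-\mu_0 I)\leqslant\deg f_i-1$ and hence $\operatorname{rank}(A-\mu_0 I)\leqslant\sum_i(\deg f_i-1)=n-k$. Replacing $A$ by $A-\mu_0 I$ (which preserves $\varphi$ and again has $k$ companion blocks), the goal reduces to finding $W$ with $\varphi(W)\geqslant(n-k)/2$, where now $A=\operatorname{diag}(C_{g_1},\dots,C_{g_k})$ with $\sum_i\deg g_i=n$.

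The heart of the argument is a construction inside a single companion block $C=C_g$ of size $d$. In the standard basis $e_1,\dots,e_d$ one has $Ce_j=e_{j+1}$ for $j<d$, while $Ce_d$ is the (generally nonzero) last column of $C$. Hence if $T\subseteq\{1,\dots,d\}$ satisfies $d\notin T$ and $T\cap(T+1)=\varnothing$ (where $T+1=\{j+1:j\in T\}$), then $W_T:=\operatorname{span}\{e_j:j\in T\}$ has $CW_T=\operatorname{span}\{e_j:j\in T+1\}$, so $W_T\cap CW_T=0$ and $\varphi(W_T)=|T|$. Taking $T$ to consist of every other index---the odd indices $1,3,\dots,d-1$ when $d$ is even, the even indices $2,4,\dots,d-1$ when $d$ is odd---gives $|T|=\lfloor d/2\rfloor$ and $d\notin T$, as required. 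Now let $W$ be the direct sum of these subspaces across the blocks $C_{g_1},\dots,C_{g_k}$; since each block is $A$-invariant, $\varphi$ adds over the blocks, so $\varphi(W)=\sum_i\lfloor\deg g_i/2\rfloor\geqslant\sum_i(\deg g_i-1)/2=(n-k)/2$. Combining with the previous step, $2r\geqslant 2\varphi(W)\geqslant n-k\geqslant\operatorname{rank}(A-\mu_0 I)$.

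The main obstacle is making the bookkeeping in the last two steps line up exactly, and this is where the factor $2$ is spent: one must split each cyclic block into two halves while avoiding the last basis vector $e_d$ (on which $C$ acts through the coefficients of $g$ rather than by a clean shift), and one must use the \emph{rational}-form blocks rather than Jordan blocks---it is precisely the identity ``number of rational-form blocks $=n-\min_\mu\operatorname{rank}(A-\mu I)$'' that makes $\sum_i\lfloor d_i/2\rfloor\geqslant(n-k)/2$ the right estimate. The remaining points are routine: that $\varphi$ is genuinely additive over an invariant direct-sum decomposition, and that subtracting $\mu_0 I$ leaves the number of companion blocks unchanged (each $C_{f_i}-\mu_0 I$ is still non-derogatory).
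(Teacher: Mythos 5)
Your proof is correct and follows essentially the same route as the paper: pass to the rational normal form, use the identity $\min_\mu\operatorname{rank}(A-\mu I)=n-k$, and witness $r\geqslant(n-k)/2$ by selecting every other shift position (avoiding the last column) inside each companion block. Your subspace quantity $\varphi(W)=\dim(AW+W)-\dim W$ with $W$ spanned by alternate basis vectors is a coordinate-free repackaging of the paper's choice of an identity submatrix $A[I|J]$ of size $\sum_t\lfloor m_t/2\rfloor$ with disjoint row and column index sets; the intermediate replacement of $A$ by $A-\mu_0I$ before constructing $W$ is harmless but unnecessary.
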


\begin{proof}
Both the assumption and conclusion are independent of the substitution $A\to C^{-1}AC$, so we can assume that $A$ has rational normal form. We denote the number of diagonal blocks by $k$ and their sizes by $m_1,\ldots,m_k$. We have $\min_\mu\operatorname{rank}(A-\mu I)=n-k$, and we are going to conclude the proof by constructing a unit square submatrix $A'=A[I|J]$ with $I\cap J=\varnothing$ and $|I|=|J|\geqslant 0.5(n-k)$. Namely, we pick a family of $\lfloor m_t/2\rfloor$ non-consecutive sub-diagonal ones from a $t$th diagonal block of $A$, and the union of all such families will be the diagonal of $A'$.
\end{proof}

\begin{claim}\label{lem4}
Let $S\subset\F^{n\times n}$, $P\in\F^{p\times n}$, $Q\in\F^{n\times q}$. Let $k$ be the smallest integer such that $PS^kQ\neq0$. Then, for any $A_1,\ldots,A_k\in S$, we have $\operatorname{rank}(PA_1\ldots A_kQ)\leqslant n/k$.\end{claim}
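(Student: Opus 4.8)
The plan is to exhibit $\operatorname{rank}(PA_1\cdots A_kQ)$ as the size of a linearly independent family of vectors in $\F^n$, obtained by peeling the word $A_1\cdots A_k$ off from the right. Write $M=PA_1\cdots A_kQ$ and $r=\operatorname{rank}M$, and choose $y_1,\dots,y_r\in\F^q$ so that $My_1,\dots,My_r$ are linearly independent in $\F^p$. For $1\le i\le r$ and $1\le s\le k$ I would set
$$x_i^{(s)}:=A_{k-s+1}A_{k-s+2}\cdots A_k\,Qy_i\in\F^n,$$
the image of $Qy_i$ under the length-$s$ suffix of the word. If these $rk$ vectors are linearly independent, then $rk\le n$ and we are done, so the whole proof reduces to establishing that independence.

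The mechanism is the interaction between these ``suffix'' vectors and the ``prefix'' maps $PA_1\cdots A_t$. For any $0\le t\le k$ one has
$$PA_1\cdots A_t\,x_i^{(s)}=P\,w\,Qy_i,\qquad w=A_1\cdots A_tA_{k-s+1}\cdots A_k\in S^{t+s},$$
so the minimality of $k$ forces this to vanish whenever $t+s<k$, while it equals $My_i$ exactly when $t+s=k$ (for $t+s>k$ nothing is claimed, and that case never arises). Now suppose $\sum_{i,s}c_{i,s}x_i^{(s)}=0$. I would run a downward induction on $s_0=k,k-1,\dots,1$: assuming $c_{i,s}=0$ for all $s>s_0$, apply the prefix map $PA_1\cdots A_{k-s_0}$ to the dependence. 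The terms with $s<s_0$ die because the resulting word has length $<k$; the terms with $s>s_0$ have zero coefficient by the inductive hypothesis; and what remains is $\sum_i c_{i,s_0}My_i=0$, whence $c_{i,s_0}=0$ by the choice of the $y_i$. (The base case $s_0=k$ simply applies $P$, and here every word involved already has length $\le k$.) This proves independence and gives $r\le n/k$.

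I do not expect a genuine obstacle: the only delicate point is the bookkeeping of word lengths --- that composing a length-$(k-s_0)$ prefix with a length-$s$ suffix yields a word of length exactly $(k-s_0)+s$, which is $<k$ precisely when $s<s_0$ and $=k$ precisely when $s=s_0$ --- so the real content is spotting the right family $x_i^{(s)}$. As a remark, the same argument carried one step further, also including $s=0$ (the vectors $Qy_i$, which are independent because the $My_i$ are), in fact yields the slightly stronger bound $r\le n/(k+1)$; I would nonetheless state the result as $n/k$, which is the form needed in what follows.
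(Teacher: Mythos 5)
Your proof is correct. It rests on the same underlying object as the paper's argument --- the filtration $V_t=\operatorname{Im}Q+\sum_{M\in S^{\leqslant t}}\operatorname{Im}MQ$ by word length --- but the execution is genuinely different. The paper picks a basis adapted to this filtration, notes that every $A\in S$ is then block-Hessenberg, that $Q$ is supported on the bottom layer while $P$ vanishes on $V_{k-1}$, and concludes that $PA_1\cdots A_kQ$ factors through the subdiagonal blocks $A_t(t,t-1)$, so its rank is at most the size of the smallest of the layers $\mathcal{B}_0,\ldots,\mathcal{B}_k$, hence at most $n/k$ by pigeonhole. You run the same mechanism in the dual direction: instead of bounding the rank by the smallest layer, you show in effect that every layer has dimension at least $r$, by producing the $r$ suffix-images $x_i^{(s)}$ for each length $s$ and certifying independence of the whole family with the prefix maps $PA_1\cdots A_t$ and the minimality of $k$; this gives $rk\leqslant n$ directly, with no basis or block bookkeeping. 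Your closing remark is consistent with the paper as well: since there are $k+1$ layers, the paper's proof also yields $n/(k+1)$ even though it only states $n/k$. The one caveat is that the $s=0$ extension needs $PQ=0$, so that the terms $PQy_i$ drop out at the first step of the downward induction; this holds under the reading that ``smallest integer'' allows $k=0$, and it holds where the claim is applied in Claim~\ref{lem5}, while your main argument only uses $PS^jQ=0$ for $1\leqslant j<k$ and so is unaffected.
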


\begin{proof}
Let $V_0=\operatorname{Im}Q$ and $V_t=\sum_{M\in S^{\leqslant t}}\operatorname{Im}MQ$. Let $\B_0,\ldots,\B_k\subset\F^n$ be vector families such that $\B_0\cup\ldots\cup\B_t$ is a basis of $V_t$ for $t=0,\ldots,k$. Let $\mathcal{C}\subset\F^n$ be such that $\B_0\cup\ldots\cup\B_k\cup\mathcal{C}$ is a basis of $\F^n$. Every matrix $A\in S$ has the form
$$\left(\begin{array}{c|c|c|c|c|c|c}
 & \B_0 &\B_1 &\ldots &\B_{k-1} &\B_k&\mathcal{C} \\\hline
\B_0 & *  &\ldots &\ldots &\ldots &*&* \\\hline
\B_1 & A(1,0)& *  &\ldots &\ldots  &*&* \\\hline
\B_2 & O & A(2,1)& *   &\ldots  &*&* \\\hline
\vdots&\vdots&O&\ddots&*&\vdots&\vdots\\\hline
\B_k&\vdots&\vdots&\ddots&A(k,k-1)&*&*\\\hline
\mathcal{C}&O&O&\ldots&O&*&*
\end{array}\right),$$
where the $*$'s stand for entries that we need not specify, and the left column and top row of the matrix above indicate the basis vectors the respective blocks of rows and columns correspond to. We also have $P=\left(O|\ldots|O|P'|*\right)$, $Q=(Q'|O|\ldots|O)^\top$ with some matrices $P',Q'$ at the $\B_k$ position of $P$ and the $\B_0$ position of $Q$, respectively. For $A_1,\ldots,A_k\in S$, the matrix $PA_k\ldots A_1Q$ equals $P'A_k(k,k-1)\ldots A_1(1,0)Q'$, so its rank is at most $\min_t|\B_t|\leqslant n/k$.
\end{proof}

\begin{claim}\label{lem5}
Let $S\subset\Mat_n(\F)$ be an irreducible set, and assume that $\F S^{\leqslant \lambda}$ contains a square-zero matrix $H$ of rank $\rho\geqslant 2$. Then there exist $\rho_1\in\left[1,0.5{\rho}\right]$ and
$$\lambda_1\leqslant \frac{\lambda\rho}{\rho_1}+\frac{4n(\rho-\rho_1)}{\rho\rho_1}$$
such that $\F S^{\leqslant \lambda_1}$ contains a square-zero matrix of rank equal to $\rho_1$.
\end{claim}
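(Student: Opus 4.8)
We are given a square-zero matrix $H\in\F S^{\leqslant\lambda}$ of rank $\rho\geqslant 2$. Put it in the normal form displayed in Section~1, so that $\operatorname{Im}H$ and $\ker H$ are $\rho$- and $(n-\rho)$-dimensional with $\operatorname{Im}H\subseteq\ker H$. The idea is to produce a square-zero matrix of smaller rank $\rho_1$ as a product $H A H$ (up to a linear combination), where $A$ is a word of moderate length whose "relevant block" — the composition $\operatorname{Im}H\hookrightarrow\F^n \xrightarrow{A}\F^n \twoheadrightarrow \F^n/\ker H$, equivalently the map $P A Q$ for suitable $P\in\F^{\rho\times n}$, $Q\in\F^{n\times\rho}$ with $PQ=0$ — has small but positive rank. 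Then $HAH$ is automatically square-zero (since $\operatorname{Im}H\subseteq\ker H$), has rank equal to the rank of that block, and lies in $\F S^{\leqslant 2\lambda+(\text{length of }A)}$.

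First I would set up $P,Q$ precisely: write $H = Q_0 P_0$ with $Q_0\in\F^{n\times\rho}$ of rank $\rho$, $P_0\in\F^{\rho\times n}$ of rank $\rho$, and $P_0 Q_0 = 0$ (possible because $H^2=0$); then for any $A$, $HAH = Q_0 (P_0 A Q_0) P_0$, so $\rank(HAH) = \rank(P_0 A Q_0)$ and $HAH$ is square-zero. Apply Claim~\ref{lem4} with this $P=P_0$, $Q=Q_0$: letting $k$ be least with $P_0 S^k Q_0\neq 0$, every word $A=A_1\cdots A_k$ of length $k$ satisfies $\rank(P_0 A Q_0)\leqslant n/k$. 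If $k$ is undefined, i.e. $P_0 S^m Q_0 = 0$ for all $m$, that would force $\operatorname{Im}Q_0$ to be $S$-invariant modulo $\ker P_0$, contradicting irreducibility in the style of Claim~\ref{lem3}; so $k$ exists. Pick a word $A$ of length exactly $k$ with $P_0 A Q_0\neq 0$, set $\rho' := \rank(P_0 A Q_0)\in[1, n/k]$, and observe $HAH\in\F S^{\leqslant 2\lambda+k}$ is square-zero of rank $\rho'$.

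Now two cases. If $\rho'\leqslant 0.5\rho$, we are essentially done: take $\rho_1 = \rho'$ (we may also have to shrink further, but a square-zero matrix of rank $\rho'$ contains, by restriction, square-zero matrices of every rank between $1$ and $\rho'$, so in particular one of rank exactly any target $\leqslant\rho'$ — the cleanest is to keep $\rho_1=\rho'$ and verify the bound). If instead $\rho' > 0.5\rho$, then $n/k \geqslant \rho' > 0.5\rho$ gives $k < 2n/\rho$, so $2\lambda + k < 2\lambda + 2n/\rho$; but this alone is not yet the claimed bound and the rank $\rho'$ is still large. The fix is to iterate the $HAH$ construction \emph{inside} the rank-$\rho'$ square-zero matrix — or, more efficiently, to apply Claim~\ref{lem2} to the block $A\mapsto P_0 A Q_0$: the hypothesis of Claim~\ref{lem2} (that $\rank(P'AQ')\leqslant r$ for all $P'Q'=0$) is exactly what Claim~\ref{lem4} delivers once $k$ is large, and its conclusion $\rank(A-\mu I)\leqslant 2r$ feeds back a low-rank target. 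I would use this to argue that whenever the obtained rank $\rho'$ is still $>0.5\rho$, one can instead choose a \emph{longer} word — length roughly $\rho\lambda/\rho_1$ — whose block has rank exactly the desired $\rho_1\leqslant 0.5\rho$, trading length for rank at the stated exchange rate, which is where the two terms $\lambda\rho/\rho_1$ and $4n(\rho-\rho_1)/(\rho\rho_1)$ come from: the first is the cost of "raising $H$ to a $\rho/\rho_1$-fold sandwich," the second the accumulated cost of the Claim~\ref{lem4}/Claim~\ref{lem2} rank-reduction steps, each contributing about $2n/\rho$ to the length while dropping the rank by a constant fraction, over roughly $\log(\rho/\rho_1)$ stages — but summing the geometric-type series gives the clean closed form $4n(\rho-\rho_1)/(\rho\rho_1)$ rather than a logarithm.

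**Main obstacle.** The delicate point is the bookkeeping that turns the per-step estimate "$k<2n/\rho$ and rank halves" (cheap but only halves once) into the global bound with denominator $\rho_1$ in \emph{both} error terms: one must choose, at each of the $\sim\log_2(\rho/\rho_1)$ halving stages, a word whose block-rank is controlled via Claim~\ref{lem4} applied with the \emph{current} pair $(P,Q)$ of shrinking width, and verify that the lengths add up to $\lambda\rho/\rho_1 + 4n(\rho-\rho_1)/(\rho\rho_1)$ exactly — in particular that the factor multiplying $\lambda$ is the telescoping product $\rho/\rho_0\cdot\rho_0/\rho_1\cdots = \rho/\rho_1$ and not something larger, and that the $n$-term, being a sum of contributions $\approx 4n\cdot(\tfrac1{\rho_1}-\tfrac1\rho)\cdot(\text{stage factor})$, collapses correctly. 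Handling the boundary case $\rho_1=1$ and the rounding in "$\lfloor\,\cdot\,\rfloor$"-type halvings (as in the submatrix construction of Claim~\ref{lem2}) without losing additive $O(n)$ slack will require care, since the final Theorem~\ref{thrlen} can only afford a total of $O(n\log n)$ after summing all these Claim~\ref{lem5} applications.
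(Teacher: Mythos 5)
Your reduction of the easy case is fine: writing $H=Q_0P_0$ with $P_0Q_0=0$, any word $A$ gives a square-zero matrix $HAH$ of rank $\operatorname{rank}(P_0AQ_0)$, and when that rank $\rho'$ happens to be at most $0.5\rho$ the length $2\lambda+k$ does verify against the claimed estimate. But the whole difficulty of Claim~\ref{lem5} is the opposite case, and there your argument has a genuine gap. You take $k$ minimal with $P_0S^kQ_0\neq0$, i.e.\ minimal so that the whole block $A'=P_0AQ_0$ is nonzero; this $k$ can be as small as $1$, and then Claim~\ref{lem4} applied to the single pair $(P_0,Q_0)$ gives only the useless bound $\operatorname{rank}A'\leqslant n$, while the hypothesis of Claim~\ref{lem2} --- control of $\operatorname{rank}(PA'Q)$ for \emph{all} $P,Q$ with $PQ=0$ --- is not delivered by anything in your setup (if $A'$ were scalar, every such compression would vanish even though $A'\neq0$, so nonvanishing of $A'$ says nothing about its compressions). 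Your two suggested fixes do not close this: ``iterate $HAH$ inside the rank-$\rho'$ matrix'' is circular (it is essentially the statement being proved, $\rho'$ may equal $\rho$, and since the new length is $2\lambda+k$ any such iteration at least doubles the coefficient of $\lambda$, overshooting $\lambda\rho/\rho_1$), and ``choose a longer word whose block has rank exactly the desired $\rho_1$'' is asserted, not constructed --- you yourself flag this bookkeeping as an unresolved obstacle.

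The paper's proof avoids this by defining $k$ as the minimum over all \emph{punctured} pairs: the least length for which some $P'A_1\cdots A_kQ'\neq0$ with $P'=(O|O|P)$, $Q'=(Q|O|O)^\top$, $P\in\F^{p\times\rho}$, $Q\in\F^{\rho\times q}$, $PQ=0$. This choice does two jobs at once. First, it certifies that the block $A'$ of the chosen word is non-scalar, so when $k\leqslant 4n/\rho$ one applies Claim~\ref{lem1} to $A'$ and takes $H_1=\psi(HA)H$, a square-zero matrix whose only nonzero block is $\psi(A')$, of rank $\rho_1\leqslant\rho/\delta\leqslant 0.5\rho$ and spanned by words of length at most $(\delta-1)(\lambda+k)+\lambda\leqslant\lambda\rho/\rho_1+4n(\rho-\rho_1)/(\rho\rho_1)$; this polynomial trick, which is what produces the $\lambda\rho/\rho_1$ term, is entirely absent from your proposal. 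Second, when $k\geqslant 4n/\rho$, the minimality over all pairs is exactly what turns Claim~\ref{lem4} into the hypothesis of Claim~\ref{lem2} (namely $\operatorname{rank}(PA'Q)\leqslant n/k$ for every $PQ=0$), whence $H_1=HAH-\mu H$ has rank $\rho_1\leqslant 2n/k\leqslant 0.5\rho$ and length $2\lambda+k\leqslant\lambda\rho/\rho_1+4n(\rho-\rho_1)/(\rho\rho_1)$. Note also that the bound is obtained in a single step, not by summing contributions over $\log_2(\rho/\rho_1)$ halving stages as your sketch envisages, and that ``restricting'' a square-zero matrix to lower its rank is not a legitimate move here, since the lower-rank matrix must itself lie in $\F S^{\leqslant\lambda_1}$.
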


\begin{proof}
Let $P\in\F^{p\times \rho}$, $Q\in\F^{\rho\times q}$ be non-zero matrices satisfying $PQ=0$. We choose a basis such that
$$H=\left(\begin{array}{c|c|c}
O&O&I_\rho\\\hline
O&O&O\\\hline
O&O&O
\end{array}\right)$$
and define $P'=(O|O|P)$ and $Q'=(Q|O|O)^\top$. Let $k$ be the smallest integer for which there exist $P',Q'$ defined as above and $A_1,\ldots,A_k\in S$ satisfying $P'A_1\ldots A_kQ'\neq0$ (such an integer exists because $S$ is irreducible). We write $A=A_1\ldots A_k$, and we denote by $A'$ the bottom left block of $A$. Since $PA'Q\neq0$, the matrix $A'$ is non-scalar, that is, its minimal polynomial has degree $\delta>1$.

\textit{Case 1.} Assume $k\leqslant 4n/\rho$. By Claim~\ref{lem1}, there is a polynomial $\psi$ of degree at most $(\delta-1)$ such that $\rho_1:=\operatorname{rank}\psi(A')\in[1, \rho/\delta]$; we see that $H_1=\psi(HA)H$ is a square-zero matrix of rank $\rho_1$. It remains to note that $H_1$ is spanned by words of length at most $(\delta-1)(\lambda+k)+\lambda \leqslant\lambda\delta+(\delta-1)k\leqslant \lambda\rho/\rho_1+4n(\rho/\rho_1-1)/\rho$.

\textit{Case 2.} Now let $k\geqslant 4n/\rho$. The matrix $HAH$ has $A'$ at the upper right block and zeros everywhere else. According to Claim~\ref{lem4}, we have $\operatorname{rank}(PA'Q)\leqslant n/k$ for any choice of $P,Q$ as above. 
We set $H_1=HAH-\mu H$ with $\mu\in\F$, and we conclude by Claim~\ref{lem2} that $\rho_1:=\operatorname{rank}(H_1)\leqslant 2n/k$.
So we have $\rho_1\leqslant 0.5\rho$, and $H_1$ is spanned by words of length at most $2\lambda+k\leqslant \lambda\rho/\rho_1+2n/\rho_1\leqslant\lambda\rho/\rho_1+4n(1-\rho_1/\rho)/\rho_1$.
\end{proof}


%

\begin{proof}[Proof of Theorem~\ref{thrlen}.]
As in the proof of Theorem~\ref{thr5}, we can assume without loss of generality that $\F$ is algebraically closed and $S$ is irreducible. Using Claim~\ref{lem3}, we find a square-zero matrix of rank $\rho_0>0$ in $\F S^{\leqslant\lambda_0}$ with $\lambda_0\rho_0\leqslant 2n$; if $\rho_0=1$, then we apply Corollary~\ref{lemPap} and complete the proof. Otherwise, we repeatedly apply Claim~\ref{lem5} and obtain a sequence $(\lambda_0,\rho_0),\ldots,(\lambda_\tau,\rho_\tau)$ such that $\rho_\tau=1$ and for all $t\in\{0,\ldots,\tau-1\}$ it holds that $\rho_{t+1}\in\left[1,0.5\rho_{t}\right]$,
$$\lambda_{t+1}\leqslant \frac{\lambda_t\rho_t}{\rho_{t+1}}+\frac{4n(\rho_t-\rho_{t+1})}{\rho_t\rho_{t+1}},$$
and every $\F S^{\leqslant \lambda_t}$ contains a square-zero matrix of rank $\rho_t$. By induction we get
$$\lambda_t\leqslant \frac{\lambda_0\rho_0}{\rho_t}+\frac{4n}{\rho_t}\left(t-\frac{\rho_1}{\rho_0}-\ldots-\frac{\rho_{t}}{\rho_{t-1}}\right),$$
which implies (after the substitution $\alpha_t:=\rho_t/\rho_{t-1}$) that
$$\lambda_\tau\leqslant 2n+4n\left(\tau-\sum_{t=1}^\tau\alpha_t\right),$$ 
and since the minimum value of $\alpha_1+\ldots+\alpha_\tau$ subject to $\alpha_t>0$ and $\alpha_1\ldots\alpha_\tau=\rho_0^{-1}$ is attained when $\alpha_1=\ldots=\alpha_\tau=\rho_0^{-1/\tau}$, we get $$\lambda_\tau\leqslant 2n+4n\tau\left(1-\rho_0^{-1/\tau}\right).$$ The right-hand side of this inequality is an increasing function of $\tau$, so it attains its maximum at the largest possible value $\tau=\log_2\rho_0$. We get $\lambda_\tau\leqslant2n+2n\log_2\rho_0$, and it remains to apply Corollary~\ref{lemPap}.
\end{proof}

The author does not expect his result to be tight even asymptotically, so this paper does not show any effort on improving the $o(n\log n)$ part of the upper bound. 

\medskip

The author is indebted to O.\,V.~Markova from Moscow State University for a series of talks on the topic, which he has had a privilege to attend since 2006. Mateusz Micha\l{}ek told the author in June 2018 about a very similar problem, known as the quantum version of Wielandt's inequality~\cite{WE}, and we quickly came to a conclusion that the progress on one of these problems can lead to the progress on the other. In particular, the author hopes that the techniques developed in this paper will allow one to get an asymptotically optimal $O(n^2)$ bound for the largest value of $\tau$ such that the equality $\F S^\tau=\Mat_n(\F)$ holds for any set $S$ for which there exists a $t$ satisfying $\F S^t=\Mat_n(\F)$ --- while the present paper gives an $O(n\log n)$ bound for the same problem but with $S^\tau$, $S^t$ replaced by $S^{\leqslant \tau}$, $S^{\leqslant t}$, respectively.


\begin{thebibliography}{9}

\bibitem{GLMS}
A. Guterman, T. Laffey, O. Markova, H. \v{S}migoc, A resolution of Paz's conjecture in the presence of a nonderogatory matrix, \textit{Linear Algebra Appl.} 543 (2018) 234-250.

\bibitem{TL}
T. J. Laffey, Simultaneous reduction of sets of matrices under similarity, \textit{Linear Algebra Appl.} 84 (1986) 123-138.

\bibitem{Au1}
M. S. Lambrou, W. E. Longstaff, On the lengths of pairs of complex matrices of size six, \textit{Bull. Aust. Math. Soc.} 80 (2009) 177-201.

\bibitem{Au2}
W. E. Longstaff, A. C. Niemeyer, O. Panaia, On the lengths of pairs of complex matrices of size at most five, \textit{Bull. Aust. Math. Soc.} 73 (2006) 461-472.

\bibitem{Mar}
O. V. Markova, On the length of upper-triangular matrix algebra, \textit{Russ. Math. Surv.} 60 (2005) 984-985.

\bibitem{Papp}
C. Pappacena, An Upper Bound for the Length of a Finite-Dimensional Algebra, \textit{J. Algebra} 197 (1997) 535-545.

\bibitem{PappGL}
C. J. Pappacena, L. W. Small, J. Wald, Affine semiprime algebras of GK dimension one are (still) pi, \textit{Glasgow Math. J.} 45 (2003) 243-247.

\bibitem{Paz}
A. Paz, An application of the Cayley--Hamilton theorem to matrix polynomials in several variables, \textit{Linear Multilinear A.} 15 (1984) 161-170.

\bibitem{RRBook}
H. Radjavi, P. Rosenthal. Simultaneous triangularization. Springer, 2012.

\bibitem{WE}
M. Sanz, D. P\'{e}rez-Garc\'{i}a, M. M. Wolf, J. I. Cirac, A quantum version of Wielandt's inequality, \textit{IEEE T. Inform. Theory} 56 (2010) 4668-4673.

\end{thebibliography}
\end{document}